\documentclass[reqno]{amsart}

\makeatletter
\renewcommand\part{%
	\if@noskipsec \leavevmode \fi
	\par
	\addvspace{4ex}%
	\@afterindentfalse
	\secdef\@part\@spart}

\def\@part[#1]#2{%
	\ifnum \c@secnumdepth >\m@ne
	\refstepcounter{part}%
	\addcontentsline{toc}{part}{\thepart\hspace{1em}#1}%
	\else
	\addcontentsline{toc}{part}{#1}%
	\fi
	{\parindent \z@ \raggedright
		\interlinepenalty \@M
		\normalfontsymmetric decreasing rearrangement
		\ifnum \c@secnumdepth >\m@ne
		\Large\bfseries \partname\nobreakspace\thepart
		\par\nobreak
		\fi
		\huge \bfseries #2%
		\par}%
	\nobreak
	\vskip 3ex
	\@afterheading}
\def\@spart#1{%
	{\parindent \z@ \raggedright
		\interlinepenalty \@M
		\normalfont
		\huge \bfseries #1\par}%
	\nobreak
	\vskip 3ex
	\@afterheading}
\makeatother
\usepackage{color}
\usepackage[dvipsnames]{xcolor}
\usepackage{ifpdf}
\ifpdf 
    \usepackage[pdftex]{graphicx}   
    \usepackage[pdftex,     
            plainpages=false,   
            breaklinks=true,    
            colorlinks=true,
            linkcolor=red,
            citecolor=green,
            pdftitle=My Document
            pdfauthor=My Good Self
           ]{hyperref} 
\else 
    \usepackage{graphicx}       
\fi 

\usepackage{subfig}

\usepackage{glossaries}
\usepackage{glossary-mcols}

\usepackage{aurical}
\usepackage{amsfonts,amsmath}	
\usepackage{amssymb}
\usepackage{verbatim}
\usepackage{amsopn}
\usepackage[english]{babel}
\usepackage{amsthm}
\usepackage{enumerate}
\usepackage{mathrsfs}	
\usepackage{enumitem}
\usepackage{mathtools}
\usepackage{esint}
\usepackage{bbm}
\usepackage{caption}
\usepackage{marginnote}
\usepackage[marginparwidth=2cm]{geometry}
\usepackage{csquotes}
\usepackage{cleveref}
\crefname{enumi}{part}{parts}
\date{\today}

\theoremstyle{definition} \newtheorem{definition}{Definition}[section]
\theoremstyle{definition} 
\theoremstyle{plain} \newtheorem{lemma}[definition]{Lemma}
\theoremstyle{plain} \newtheorem{proposition}[definition]{Proposition}
\theoremstyle{plain} \newtheorem{theorem}[definition]{Theorem}
\theoremstyle{plain} \newtheorem{corollary}[definition]{Corollary}
\theoremstyle{definition} 
\theoremstyle{plain} 
\theoremstyle{definition} 
\theoremstyle{definition}

\DeclareMathOperator{\dive}{div}

\newcommand{\norm}[1]{\lVert #1\rVert}

\newcommand{\R}{\mathbb{R}}

\newcommand{\loc}{\text{\rm loc}}

\numberwithin{equation}{section} 

\theoremstyle{plain} \newtheorem*{theorem*}{Theorem}
\theoremstyle{plain} 
\theoremstyle{plain} \newtheorem*{mthm*}{Main Theorem}
\theoremstyle{plain} \newtheorem*{conjecture*}{Conjecture}
\theoremstyle{plain} 
\theoremstyle{plain} \newtheorem*{problem*}{Problem}

\usepackage{biblatex} 
\title{Existence and Blow-Up for Non-linear Fokker-Planck with Controlled Drift}

\author{Giacomo Maria Leccese}
\email{giacomo.leccese94@gmail.com}

\begin{document}

\begin{abstract}
We extend the global existence results for critical parameters of \cite{bianchini2024existence} to the multidimensional problem
\begin{equation*}
    \partial_t u + \text{div } (b(t,x) u^{1+k}) = \Delta u
\end{equation*}
for $b:(0,\infty)\times\mathbb{R}^d\to\mathbb{R}^d$ non-autonomous fields, in the case
\begin{equation*}
   b\in L^\infty_{\mathrm{loc}}
  \bigl((0,\infty),L^{p,\infty}(\mathbb{R}^d)\bigr),
  \qquad p>d\ge 2.
\end{equation*}
For the critical case, we study the difference between two ordered solutions whose conserved
small mass absorbs the drift. Finitely many increments then reach an arbitrary solution.
\end{abstract}

\maketitle

\section{Introduction}

We study the Cauchy problem for the nonlinear Fokker--Planck equation
\begin{equation}\label{eq:pde}
\begin{cases}
\partial_tu+\dive\bigl(b(t,x)u^{1+k}\bigr)=\Delta u, &(t,x)\in(0,\infty)\times\R^d,\\
u(0,x)=u_0(x), & x\in\R^d,
\end{cases}
\end{equation}
with $k>0$ and a non-negative datum $u_0\in L^1(\R^d)\cap L^\infty(\R^d)$, under the sole
hypothesis
\begin{equation}\label{eq:assumptions}
b\in L^\infty_{\loc}\bigl((0,\infty),L^{p,\infty}(\R^d)\bigr),\qquad p>d\ge 2,
\end{equation}
on the prescribed field: no bound on $\dive b$, no sign condition and no boundedness of
$b$ are assumed.

Two mechanisms compete in \eqref{eq:pde}. The diffusion spreads the density, while the superlinear drift $ \text{div }(bu^{1+k})$ concentrates it.
Whether a bounded solution survives for all times is decided by which of the two terms prevails at small scales.
The balance is measured by the mass-preserving scaling
$u_\lambda(t,x)=\lambda^du(\lambda^2t,\lambda x)$, which turns a solution of \eqref{eq:pde} into a solution with field
$b_\lambda(t,x)=\lambda^{1-dk}b(\lambda^2t,\lambda x)$.
Since $\norm{b_\lambda(t)}_{p,\infty}=\lambda^{1-dk-d/p}\norm{b(\lambda^2t)}_{p,\infty}$,
the hypothesis \eqref{eq:assumptions} is scale-invariant exactly at
\begin{equation}\label{eq:critical-expo}
k=k_c:=\frac1d-\frac1p>0 ,
\end{equation}
and \eqref{eq:critical-expo} is the threshold at which global existence is decided.
We prove it for every $0<k\le k_c$, and we exhibit coefficients and data with finite-time blow-up above it.
At $k=k_c$ no smallness is assumed, neither on the mass of $u_0$ nor on the norm of $b$.
Being in divergence form, the equation conserves the mass of non-negative bounded solutions,
$\norm{u(t)}_1=\norm{u_0}_1$. This is what makes the critical exponent delicate, since the only conserved quantity available is prescribed by the datum and cannot be assumed small.

Equations of the form \eqref{eq:pde} describe the kinetic evolution of particle systems obeying an exclusion or an inclusion principle, following the models of Kaniadakis and
Quarati \cite{kaniadakis1993kinetic,kaniadakis1994classical}. 
For the fermionic and bosonic equations, their equilibria and their finite-time condensation we refer to
\cite{carrillo20081d,toscani2012finite,toscani2025supercritical}, and to \cite{alikakos1989blow,escobedo1991large,kaplan1963growth,kieffer2025robust} for related
blow-up and large-time questions in advection--diffusion.
In one space dimension, \cite{guidolin2022global} obtained global existence in the subcritical regime for bounded Lipschitz drifts,
and \cite{bianchini2024existence} gave a complete classification under weak-Lebesgue hypotheses on $b$ or on $b_x$.
For $b\in L^{p,\infty}(\R)$ the threshold found there is $k=1-\frac1p$, which is \eqref{eq:critical-expo} with $d=1$.
The present paper and \cite{leccese2026existence} extend the two halves of that classification to
$d\ge2$. \cite{leccese2026existence} assumes a bound on $(\dive b)_-$, with the different threshold $\frac2d-\frac1p$,
whereas \eqref{eq:assumptions} constrains the field itself.
The two hypotheses are independent. A constant field has vanishing divergence and lies in no $L^{p,\infty}(\R^d)$,
while $b=\mathbf 1_{|x|<1}e_1$ belongs to every $L^{p,\infty}(\R^d)$ and its divergence is not a function.

With the present paper the classification started in \cite{bianchini2024existence} for $d=1$ and continued in \cite{leccese2026existence} under a bound on $(\dive b)_-$ is complete.
In each case global existence holds up to the critical exponent included, and blow-up occurs above it.
The critical case here is not obtained by the methods of the two previous papers.
A direct application of the one-dimensional argument cannot obtain the multidimensional critical case.
Indeed, the truncated entropy method used in \cite{bianchini2024existence} is based on the embedding $H^1(\R)\hookrightarrow L^\infty(\R)$,
which fails in higher dimensions. At the same time it is not clear to the author how to extend the argument of \cite{leccese2026existence},
based on the study of the mass concentration function $m(t,s)=\int_0^su^*(t,r)\,\mathrm{d}r$,
 to the case \eqref{eq:assumptions} for $p<\infty$. That method consists of a pointwise comparison in $s$,
and requires the drift contribution on $\{u>u^*(t,s)\}$ to be bounded by a function, uniformly over the sets of measure $s$. 
Such a requirement seems hard to combine with an integral condition on $b$.
In this paper we use a different comparison method. Given $u\ge v$ solutions of \eqref{eq:pde}, 
their increment $w=u-v$ satisfies $\partial_t w+\text{div }(b\,H(v,w))=\Delta w$ with $H(v,w)=(v+w)^{1+k}-v^{1+k}$. 
The subadditivity of $s\mapsto s^{k}$ splits the drift into a critical term in $w$ alone and a term linear in $w$ with coefficient $\norm{v(t)}_\infty^{k}$. 
The mass $\norm{w(t)}_1$ is conserved exactly, and when it is below an explicit threshold the Gagliardo--Nirenberg inequality absorbs the critical term. 
A bound on $w$ then transfers the bound on $v$ to $u$, and an arbitrary mass is reached in finitely many steps.
The argument uses neither the embedding $H^1\hookrightarrow L^\infty$ nor rearrangements,
but only the conservation of the mass of the increment and the Gagliardo--Nirenberg inequality.
\begin{theorem}\label{thm:global-main}
Assume conditions \eqref{eq:assumptions}.  If $0<k\le k_c$,
then the solution of \eqref{eq:pde} is globally defined.
More precisely, for every $T>0$ there is a finite constant $C$, 
depending only on $d,p,k,\norm{u_0}_1,\norm{u_0}_\infty$ and $\sup_{t\in[0,T]}\norm{ {b(t)}}_{L^{p,\infty}}$, such that
\begin{equation*}
  \sup_{0\leq t\leq T}\norm{u(t)}_\infty\leq C.
\end{equation*}
\end{theorem}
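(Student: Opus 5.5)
We combine a local theory with an a priori $L^\infty$ bound, which we obtain by a Moser/$L^q$ iteration. The subcritical case is treated directly; the critical case is reduced to small mass through ordered increments, as announced in the introduction. We take as given a local well-posedness result: for bounded nonnegative data there is a unique nonnegative mild solution on a maximal interval $[0,T^*)$, the solution is mass conserving, and if $T^*<\infty$ then $\norm{u(t)}_\infty\to\infty$. For data $u_0\ge v_0$ this solution also satisfies the comparison principle $u\ge v$, since the flux is monotone in $u$. It therefore suffices to bound $\sup_{[0,T]}\norm{u(t)}_\infty$ a priori.

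\emph{Step 1: $L^q$ energy estimate.} Test the equation with $u^{q-1}$ for $q\ge2$ to get
\begin{equation*}
\frac1q\frac{d}{dt}\norm{u}_q^q+\frac{4(q-1)}{q^2}\norm{\nabla u^{q/2}}_2^2=(q-1)\int b\,u^{q-1+k}\cdot\nabla u .
\end{equation*}
Write the right-hand side as $C\int |b|\,u^{q/2+k}|\nabla u^{q/2}|$. By Cauchy--Schwarz and the H\"older inequality in Lorentz spaces, its square is controlled by $\norm{b}_{p,\infty}^2\norm{u^{q/2+k}}_{L^{r,2}}^2$ with $\frac1r=\frac12-\frac1p$. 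Then, setting $f=u^{q/2}$, use the Lorentz-refined Gagliardo--Nirenberg inequality interpolating between $\norm{f}_{L^{2/q}}=\norm{u}_1^{q/2}$ and $\norm{\nabla f}_2$. A scaling count gives the result. The drift term is bounded by $C\norm{b}_{p,\infty}\norm{u}_1^{\theta}\norm{\nabla f}_2^{2\gamma}$ with $\gamma\le1$, and $\gamma=1$ holds exactly when $k=k_c$. In that case $\theta=k_c$, up to a $q$-dependent normalization.

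In the subcritical case $k<k_c$ we have $\gamma<1$. Young's inequality absorbs the drift into the dissipation, and a standard Alikakos--Moser iteration over $q=2^j$, with constants growing geometrically in $q$, bounds $\norm{u(t)}_\infty$ by $\norm{u_0}_1$, $\norm{u_0}_\infty$ and $\sup\norm{b}_{p,\infty}$ on $[0,T]$. In the critical case the same computation closes only if $C_{GN}\norm{b}_{p,\infty}\norm{u_0}_1^{k_c}\le$ a small constant $\epsilon_0$. This defines an explicit mass threshold $m_*=m_*(d,p,\sup_{[0,T]}\norm{b}_{p,\infty})$.

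\emph{Step 2: the increment.} This step is the main obstacle. Split $u_0$ into $N$ pieces, $u_0=\sum_{j=1}^N \phi_j$ with $\phi_j\ge0$ and $\norm{\phi_j}_1\le m_*/2$, for instance by slicing $u_0$ by $\frac jN u_0$, so that $N\sim \norm{u_0}_1/m_*$. Let $v_j$ be the solution with datum $\frac jN u_0$. By comparison $v_{j}\ge v_{j-1}$, and $w=v_j-v_{j-1}$ solves $\partial_tw+\dive(bH(v_{j-1},w))=\Delta w$. Mass conservation gives $\norm{w(t)}_1=\norm{u_0}_1/N<m_*$. Next use $H(v,w)\le(1+k)w\,(v+w)^k\le(1+k)w\,(v^k+w^k)$, which holds by subadditivity of $s^k$. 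Rerunning the $L^q$ estimate for $w$ produces two terms. The critical term $\int|b|w^{q-1+k}|\nabla w|$ is absorbed as in Step 1 thanks to the small mass of $w$. The linear term $\norm{v_{j-1}(t)}_\infty^k\int |b|\,w^{q-1}|\nabla w|$ is subcritical, because $\frac1p<\frac1d$ makes it a lower-order drift; it is handled by Young's inequality with a constant depending on $\norm{v_{j-1}}_\infty^k$. The delicate point is that $H$ is not monotone-sign-free in $w$ in the divergence term. Testing with $w^{q-1}$ nevertheless only produces $|H|\,|\nabla w^{q-1}|$, so pointwise bounds on $|H|$ suffice. The Moser constants must also stay uniform in $q$.

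\emph{Step 3: conclusion.} Step 2 gives $\sup_{[0,T]}\norm{w}_\infty\le C(\sup_{[0,T]}\norm{v_{j-1}}_\infty,\dots)$, and hence $\norm{v_j}_\infty\le\norm{v_{j-1}}_\infty+\norm{w}_\infty$ on $[0,T]$. Starting from $v_1$, which has small mass and is bounded by Step 1, induction over $j\le N$ bounds $u=v_N$ on $[0,T]$ by a constant depending only on the stated quantities. The blow-up alternative then yields global existence.
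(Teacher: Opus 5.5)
Your overall scheme matches the paper's: data $\frac jN u_0$, comparison, conserved increment mass $\norm{u_0}_1/N$, subadditive splitting of $H$, small mass absorbing the critical term, and $\norm{v_{j-1}}_\infty$ controlling the linear term. The gap is in how you use the small mass: it fails at high Moser levels.

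You interpolate with $\norm{w}_1=\norm{f}_{2/q}^{2/q}$ at every level $q$ and claim a single threshold $m_*(d,p,B_T)$. After testing with $w^{q-1}$, however, the drift enters with a factor of order $q$ relative to the dissipation $\frac{4(q-1)}{q^2}\norm{\nabla f}_2^2$. So absorption at level $q$ needs $q\,C_q\,B_T\,\norm{w}_1^{k_c}\lesssim1$, where $C_q$ is the constant in $\norm{f^{1+2k_c/q}}_{L^{p',2}}\le C_q\norm{f}_{2/q}^{2k_c/q}\norm{\nabla f}_2$. Testing with $w=e^{-|x|^2}$, i.e.\ $f=e^{-q|x|^2/2}$, gives $C_q\gtrsim q^{-(1-d/p)/2}$, hence $qC_q\gtrsim q^{(1+d/p)/2}\to\infty$. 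The admissible mass therefore shrinks as $q$ grows, while $\norm{w}_1=\norm{u_0}_1/N$ is frozen once $N$ is fixed. So ``absorbed as in Step 1 thanks to the small mass'' is false for large $q$, and the uniformity in $q$ you list as a requirement is exactly what breaks. The same objection applies to your Step 1 bound for $v_1$.

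The missing observation is that the problem is critical only at the step $L^1\to L^2$. Use the small mass once, at $q=2$: this is the energy inequality of Theorem \ref{thm:increment}, and Gronwall bounds $\sup_{[0,T]}\norm{w}_2$. For $q\ge4$, interpolate with $\norm{w}_{q/2}$ instead of $\norm{w}_1$. Relative to $L^{q/2}$ the exponent $k_c$ is subcritical: in Theorem \ref{thm:Guid} the quantity $\frac1d-\frac1p-\frac{k_c}{n}=k_c(1-\frac1n)$ is positive for $n\ge2$. So Young's inequality closes with no smallness, and the constants' $q$-th roots have a convergent product. With this repair your route works, provided you also convert the linear term into $\norm{w}_{q/2}^q$ via $\norm{f}_2^2\le\varepsilon\norm{\nabla f}_2^2+C\varepsilon^{-d/2}\norm{f}_1^2$. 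A Gronwall at each level would not do, since its rates grow faster than $q$.

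The paper's route is lighter. It estimates the increment only in $L^2$ and gets $\norm{u_{(j)}}_2\le L_j$ by the triangle inequality. It then applies the $L^2\to L^\infty$ bound of Corollary \ref{cor:critL2Linfty} to $u_{(j)}$ itself, which solves the original equation. No iteration for the increment equation, with its extra linear drift, is ever needed. Your version, once repaired, additionally gives $L^\infty$ control of each increment.
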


In the supercritical domain, the energy method of \cite{ bianchini2024existence,toscani2012finite} can be generalised for the multidimensional problem as follows.

\begin{theorem}\label{thm:blowup-main}
Assume \eqref{eq:assumptions} and $k>k_c$. There exist an autonomous, compactly supported
radial vector field
\[
b\in L^{p}(\R^d,\R^d)\subset L^{p,\infty}(\R^d,\R^d),
\]
bounded if $p=\infty$, and a non-negative $u_0\in L^1(\R^d)\cap L^\infty(\R^d)$ with
$\int_{\R^d}|x|^2u_0(x)\,dx<\infty$, such that the corresponding solution of \eqref{eq:pde}
blows up in the $L^\infty$-norm in finite time.
\end{theorem}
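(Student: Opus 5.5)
We use the virial (second-moment) method of \cite{bianchini2024existence,toscani2012finite}, adapted to $d$ dimensions. We take an autonomous radial field pointing inward on a ball, $b(x)=-\varphi(|x|)\frac{x}{|x|}$, with $\varphi\ge0$ supported in $\{|x|\le R\}$. The singular profile $\varphi(r)=r^{-\alpha}\mathbf 1_{r<R}$ with $\alpha<d/p$ gives $b\in L^p$, and $\alpha=0$ gives a bounded field when $p=\infty$. The datum $u_0$ is concentrated near the origin. Set $M=\norm{u_0}_1$ and $I(t)=\int|x|^2u\,dx$.

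Suppose the solution stays bounded on $[0,T)$. Using mass conservation and the (justified) finiteness of $I$, we compute
\[
I'(t)=2dM+2\int b\cdot x\,u^{1+k}\,dx=2dM-2\int_{|x|<R}\varphi(|x|)\,|x|\,u^{1+k}\,dx .
\]
The aim is to bound the drift integral from below by a superlinear power of $1/I$. We fix $\alpha$ so that $\varphi(r)\,r\ge c\,r^{1-\alpha}$ on the ball. On the part of the mass lying in $\{|x|<\rho\}$ with $\rho\le R$, Hölder's inequality gives
\[
\int_{|x|<\rho}u\le \Big(\int r^{1-\alpha}u^{1+k}\Big)^{\frac1{1+k}}\Big(\int_{|x|<\rho}r^{-\frac{1-\alpha}{k}}\Big)^{\frac k{1+k}} .
\]
This requires $\frac{1-\alpha}{k}<d$, i.e.\ $k>\frac{1-\alpha}{d}$. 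Since $k>k_c=\frac1d-\frac1p$, we can choose $\alpha<d/p$ with $\alpha>1-dk$, so the choice is consistent exactly in the supercritical range. The last factor is then $C\rho^{(dk-(1-\alpha))/(1+k)}$.

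By Chebyshev's inequality, $\int_{|x|<\rho}u\ge M-I/\rho^2$. Choosing $\rho=\sqrt{2I/M}$, which is admissible while $I\le MR^2/2$, gives
\[
\int r^{1-\alpha}u^{1+k}\ge c\,M^{1+k}\,I^{-(dk-1+\alpha)/2}.
\]
Hence, as long as $I\le MR^2/2$,
\[
I'\le 2dM-c\,M^{1+k}I^{-\gamma},\qquad \gamma=\frac{dk-1+\alpha}{2}>0 .
\]
If $I(0)$ is small enough that $c\,M^{1+k}I(0)^{-\gamma}>4dM$, the right-hand side is negative and stays so because $I$ decreases. It is then at most $-dM$, so $I$ reaches $0$ in finite time. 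This is impossible for a nonnegative solution of mass $M>0$. Therefore $\norm{u}_\infty$ must blow up before that time, since the local theory continues bounded solutions. The initial datum is a scaled bump $u_0=\lambda^d\psi(\lambda x)$ with $M$ fixed and $\lambda$ large, so that $I(0)=\lambda^{-2}I_\psi$ is small.

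The main obstacle is justifying the virial identity: multiplying by $|x|^2$ for a merely bounded solution with a singular, discontinuous $b$ requires a cutoff argument. We would use $\int|x|^2u_0<\infty$ to propagate finiteness of $I(t)$ through a truncated weight $|x|^2\chi(x/n)$, with the drift term controlled by $b\in L^p$ and $u\in L^1\cap L^\infty$. A secondary point is that the Hölder step must keep $\alpha$ strictly inside $(1-dk,\,d/p)$, which is possible precisely because $k>k_c$.
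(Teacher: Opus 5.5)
Your proposal is correct and follows the paper's argument. The paper also uses a compactly supported inward radial field with a power-law profile near the origin (your $\alpha$ is the negative of the paper's), the second-moment inequality $I'\le 2dM-C\,I^{-\gamma}$ with $\gamma=\frac{dk-1+\alpha}{2}$ equal to the paper's $\sigma/2$, and the same admissible window $1-dk<\alpha<d/p$, which is nonempty exactly when $k>k_c$. The only difference is that you write out the H\"older--Chebyshev step and flag the cutoff justification of the virial identity, both of which the paper delegates to \cite[Proposition 5.1]{leccese2026existence}; note that your constant $c$ silently absorbs a harmless factor $M^{\gamma}$ coming from the choice $\rho=\sqrt{2I/M}$.
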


The construction is the one of \cite[Section 5]{leccese2026existence}, and only the
admissible range of the exponent changes. One takes a radial field with
$x\cdot b=-|x|\,|b|$ and $|b(x)|\simeq|x|^{\alpha}$ near the origin; the second moment
$E(t)=\int|x|^2u\,dx$ then obeys
\[
E'\le 2d\norm{u_0}_1-C\,E^{-\sigma/2},\qquad \sigma=dk-1-\alpha,
\]
so that $E$ vanishes in finite time whenever $E(u_0)$ is small enough, which is
incompatible with a bounded density of positive mass. The first condition on $\alpha$ is
$dk>1+\alpha$, which makes $\sigma>0$; the second is the membership of the field in the
space prescribed by the hypothesis. Here $|x|^{\alpha}\in L^p$ near the origin if and only
if $\alpha>-d/p$, that is $1+\alpha>1-\frac dp$, whereas in \cite{leccese2026existence}
the relevant quantity is $\dive b\simeq|x|^{\alpha-1}$ and the condition reads
$1+\alpha>2-\frac dp$. An exponent with $1-\frac dp<1+\alpha<dk$ exists precisely when
$dk>1-\frac dp$, that is $k>k_c$; for $p=\infty$ every $\alpha\in(0,dk-1)$ is admissible
and the field is bounded. We refer to \cite[Proposition 5.1]{leccese2026existence} for the
details.

For the long-time behaviour of the solutions, we consider the assumptions
\begin{equation}\label{eq:assumptions2}
\begin{split}
& k>0, \ p>d\ge 2,\\
&b\in L^\infty
  \bigl((0,\infty);L^{p,\infty}(\R^d)\bigr).
\end{split}
\end{equation}

The scenario for the long-time behaviour is described in the following theorem.
\begin{theorem}
    \label{Theo:long_time_1}
    Assume condition \eqref{eq:assumptions2}. The following holds.
    \begin{enumerate}
        \item \label{Point_1:long_time_1} If $k <k_c$, there are stationary solutions.

        \item \label{Point_2:long_time_1} If $k =k_c$, then every solution $u$ decays like $\|u(t)\|_\infty\leq \mathcal O(t^{-\frac{d}{2}}) $.
        \item \label{Point_3:long_time_1} If $k>k_c$, and the solution $u$ is uniformly bounded in time and space, i.e. $u \in L^\infty_{t,x}$, then it decays like $\|u(t)\|_\infty\leq \mathcal O(t^{-\frac{d}{2}}) $.
    \end{enumerate}
\end{theorem}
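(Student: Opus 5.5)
Part \eqref{Point_1:long_time_1} calls for a construction, while parts \eqref{Point_2:long_time_1} and \eqref{Point_3:long_time_1} reduce to one decay mechanism. For part \eqref{Point_1:long_time_1} we build an autonomous radial field $b\in L^{p,\infty}(\R^d)$, which satisfies \eqref{eq:assumptions2}, together with a positive integrable profile $U$ such that the stationary flux vanishes:
\[
b\,U^{1+k}=\nabla U .
\]
We take $U(x)=(1+|x|^2)^{-\gamma}$ with $\gamma>d/2$, so that $U\in L^1\cap L^\infty$, and define
\[
b:=\frac{\nabla U}{U^{1+k}}=-2\gamma\,x\,(1+|x|^2)^{\gamma k-1}.
\]
Then $|b|\simeq |x|^{2\gamma k-1}$ at infinity, and $b\in L^{p,\infty}$ as soon as $2\gamma k-1\le -d/p$. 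Both requirements on $\gamma$ can be met exactly when $d k/2\cdot 2/d<\dots$; explicitly, we need $\gamma$ with
\[
\frac d2<\gamma\le \frac{1-d/p}{2k},
\]
which is possible iff $dk<1-d/p$, i.e.\ $k<k_c$. With this choice $U$ is a stationary solution.

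For parts \eqref{Point_2:long_time_1}--\eqref{Point_3:long_time_1} we use the standard $L^1\to L^\infty$ De Giorgi/Moser (or $L^q$-iteration) scheme on $[t/2,t]$. The first step is to test the equation with $u^{q-1}$. The drift term becomes $\frac{(q-1)(1+k)}{q+k}\int b\cdot\nabla(u^{q+k})$ up to constants, which we bound by Hölder in Lorentz spaces:
\[
\norm{b}_{p,\infty}\,\norm{\nabla u^{q/2}}_2\,\norm{u^{k+q/2}}_{L^{r,2}},\qquad \tfrac1r=\tfrac12-\tfrac1p .
\]

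The second step differs between the two cases. In the critical case $k=k_c$, scaling shows that the factor $u^{k}$ costs exactly $\norm{u}_{d/(d-... )}$, a scale-invariant norm. Rather than relying on smallness, we reuse the increment decomposition of Theorem~\ref{thm:global-main}: global boundedness together with mass conservation gives the uniform bound $\sup_t\norm{u(t)}_\infty\le C$. Indeed, the constant in that theorem, when $b\in L^\infty_t$, can be made independent of $T$. Since the increments have small conserved mass and the absorption is scale-invariant, there is no $T$-dependence; this is the main point to check. Either way, we are reduced to the situation of part \eqref{Point_3:long_time_1} with $\norm{u}_\infty\le M$.

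The third step treats $u\in L^\infty_{t,x}$ for any $k\ge k_c$. We write $u^{k+q/2}\le M^{k-k_c}u^{k_c+q/2}$, which reduces the drift estimate to a linear-type estimate with critical weight. Next we split $b=b\mathbf 1_{|b|>\lambda}+b\mathbf 1_{|b|\le\lambda}$. The large part has small $L^{d/(1-... )}$-type norm, namely $L^{d,\infty}$-smallness after pairing with $u^{k_c}\in L^{1/k_c\cdot}$... More concretely, $b u^{k}$ acts as a drift in $L^{d,\infty}$ modulo a bounded part. The small-$L^{d,\infty}$ part is absorbed by the Hardy–Sobolev/Lorentz Sobolev inequality, while the bounded part contributes $\lambda M^{k}\int u^{q-1}|\nabla u|$, handled by Young's inequality with a time-decaying gain via Nash's inequality.

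The resulting $L^1\to L^2$ Nash-type inequality reads
\[
\frac{d}{dt}\norm{u}_2^2\le -c\,\norm{u}_2^{2+4/d}\norm{u_0}_1^{-4/d}+C\norm{u}_2^2\cdot(\text{lower order}),
\]
and it gives $\norm{u(t)}_2\lesssim t^{-d/4}$. Iterating over dyadic times, or using duality with the same estimate for the adjoint, then gives $\norm{u(t)}_\infty\lesssim t^{-d/2}$.

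The main obstacle is the lower-order bounded-drift term. It destroys pure scaling, so decay requires that the nonlinearity become small as $\norm{u}_\infty$ decays. To resolve this, we run a bootstrap: assuming $\norm{u(s)}_\infty\le A s^{-d/2}$ on $[1,t]$, the drift coefficient $\norm{b}_{p,\infty}\norm{u}_\infty^{k}$ in the relevant scale-invariant norm $L^{d,\infty}$ equals $\norm{u}_{L^{(1/k-... )}}^{k}$. By interpolation between $\norm{u}_1$ and $\norm{u}_\infty$, this quantity is scale-invariant at $k_c$ and decays for $k>k_c$. In the critical case we therefore need the mass-splitting trick of Theorem~\ref{thm:global-main}: we apply the argument to increments of small mass, where the critical coefficient is small, and conclude by finitely many steps.
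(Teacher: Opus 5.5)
Your Part 1 is correct and is the paper's construction (Proposition~\ref{prop:stationary}). Admitting the endpoint $\gamma=\frac{1-d/p}{2k}$ is legitimate, since only $b\in L^{p,\infty}$ is required.

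For Parts 2--3 there is a genuine gap: the decay mechanism at $k=k_c$ is never established, and your two reductions point at each other.

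First, the claim that the constant of Theorem~\ref{thm:global-main} becomes $T$-independent "because the increments have small mass" is not justified. In \eqref{eq:incrementode} the smallness $\delta\le\delta_*$ absorbs only the self-interaction $\int|b|^2w^{2+2k}$. The cross term coming from $(1+k)v^kw$ in \eqref{eq:split} survives as $C B_T^{2p/(p-d)}\norm{v(t)}_\infty^{2/d}\norm{w}_2^2$, and Gronwall turns it into $\exp\bigl(CT\sup_t\norm{v(t)}_\infty^{2/d}\bigr)$. That is exactly why $C_L$ in \eqref{eq:L2estimate} is proportional to $T$.

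Second, even granting $\sup_t\norm{u(t)}_\infty\le M$, your Step 3 uses $u^{k+q/2}\le M^{k-k_c}u^{k_c+q/2}$ to turn the problem back into a critical one with a non-small coefficient. So decay of bounded critical solutions is precisely what remains to be proved. The one sentence you devote to it misidentifies the smallness: for the increment, the drift coefficient contains $bv^{k_c}$. For this you only have $\norm{bv^{k_c}}_{d,\infty}\lesssim\norm{b}_{p,\infty}\norm{v}_1^{k_c}$, which is scale-invariant and carries the large mass of $v$, so it is neither small nor decaying. Absorbing a small $L^{d,\infty}$ drift via Hardy--Sobolev also fails for $d=2$, which the theorem covers. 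The paper avoids this by keeping $b\in L^{p,\infty}$ with $p>d$ and using Gagliardo--Nirenberg against the $L^1$ norm, which is where the factor $\delta^{2k}$ comes from. Your bootstrap for $k>k_c$ has the same defect: it can only start once $\norm{u(s)}_\infty$ is already small.

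The missing idea is a superlinear dissipation that dominates the linear cross term. Nash's inequality with the conserved mass of the increment, $\norm{w}_2^{2+4/d}\le C\norm{\nabla w}_2^2\norm{w}_1^{4/d}$, turns \eqref{eq:incrementode} into $\frac{\mathrm d}{\mathrm dt}\norm{w}_2^2\le A\norm{w}_2^2-c\norm{w}_2^{2+4/d}$. Hence $\norm{w}_2$ stays bounded whenever $A$ does.

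The paper runs this in self-similar variables $t=e^\tau-1$, $x=e^{\tau/2}y$, $v=e^{d\tau/2}u$. Exactly at $k=k_c$ the rescaled field satisfies $\norm{\tilde b(\tau)}_{p,\infty}=\norm{b(e^\tau-1)}_{p,\infty}$. The confinement term $\frac12\dive(yv)$ only adds $\frac d2\norm{w_j}_2^2$, so $A_j$ is finite as soon as $\sup_\tau\norm{v_{(j-1)}}_\infty<\infty$. This gives $\sup_\tau\norm{w_j}_2<\infty$, and the iteration of Theorem~\ref{thm:Guid} (with one extra lower-order term) upgrades it to $L^\infty$. Induction along the chain of Theorem~\ref{thm:chain} then yields $\sup_\tau\norm{v(\tau)}_\infty<\infty$, i.e.\ $\norm{u(t)}_\infty\le C(1+t)^{-d/2}$. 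Equivalently, in the original variables one can induct on $\norm{u_{(j-1)}(t)}_\infty\lesssim(1+t)^{-d/2}$, which makes the cross coefficient $O((1+t)^{-1})$.

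Part 3 then needs no bootstrap. The field $\hat b=bu^{k-k_c}$ is an admissible critical field with $\norm{\hat b}_{p,\infty}\le B_\infty\norm{u}_{L^\infty_{t,x}}^{k-k_c}$, and Part 2 requires no smallness of either the mass or the field.
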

The paper is organised as follows. In \Cref{sec:preliminaries}, we introduce the notation, recall Lorentz spaces, and prove some preliminary lemmas.
In \Cref{sec:local}, we state the local well-posedness of the solution.
\Cref{sec:globex} is dedicated to the proof of global existence: first the subcritical case and the passage from $L^2$ to $L^\infty$, then the $L^2$ estimate for the increment of two ordered solutions in the critical case,
and the chain of solutions which reaches an arbitrary mass.
In \Cref{sec:longbehav}, we study the decay behaviour of the solution.

\section{Notation and Preliminaries}\label{sec:preliminaries}

We will use the following notation.
\begin{itemize}
	\item The letters $C_Y, C_{HK},C_I,\dots$ denote constants that will be clearly identifiable from the context, while $C, C(k),\dots$ is a universal constant that might change from line to line.
	\item $B_T=\sup_{t \in [0,T]}\norm{ {b(t)}}_{L^{p,\infty}}$.
	\item $p'=2p/(p-2)$, so that $\frac{1}{2}=\frac 1 p +\frac 1 {p'}$.
	\item For every function $f$ in two variables $t,x$, we denote by $f(t)$ the function $x\mapsto f(t,x)$. 
\end{itemize}
\subsection{Lorentz spaces}
\label{sec:lorentz}

We briefly recall the definition and some well known results about the Lorentz space. For any $f:\R^d\to \R$, let $f^*:[0,\infty)\to \R$ be the symmetric decreasing rearrangement defined by
\begin{equation*}
	f^{\ast}(z) = \inf \Big\{\alpha>0: \mathcal L^d\big(\big\{|f|>\alpha\big\}\big)|\leq z\Big\}.
\end{equation*}
Define 
\begin{equation}
	\label{Equa:Lorentz_2}
	\|f\|_{p,q} = \begin{cases}
		\left( \displaystyle \int_0^{\infty} \big[ z^{\frac{1}{p}} f^{*}(z) \big]^q \, \frac{dz}{z} \right)^{\frac{1}{q}} & q \in [1, \infty), p \in [1,\infty), \\
		\sup\limits_{z > 0} \, z^{\frac{1}{p}}  f^{*}(z)   & q = \infty, p \in [1,\infty).
	\end{cases}
\end{equation}
with the notation $\|f\|_{\infty,\infty}=\|f\|_{\infty}$.
\begin{proposition}
	$L^{p}$ is embedded in $L^{p,\infty}$. In particular for every function $f:\R^d \to \R$,
	$$\|f\|_{p,\infty}\le \|f\|_p$$
\end{proposition}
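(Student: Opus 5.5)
The inequality $\|f\|_{p,\infty}\le\|f\|_p$ is Chebyshev's inequality rewritten for the rearrangement $f^*$; both steps below are elementary. The case $p=\infty$ is trivial, since by convention $\|f\|_{\infty,\infty}=\|f\|_\infty$. So fix $p\in[1,\infty)$ and assume $\|f\|_p<\infty$, since otherwise there is nothing to prove.

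\emph{Step 1: $f^*$ and $f$ have the same $L^p$ norm.} The map $z\mapsto f^*(z)$ is non-increasing, and for every $\alpha>0$ we have
\[
\mathcal L^1(\{f^*>\alpha\})=\mathcal L^d(\{|f|>\alpha\}).
\]
This follows from the definition of $f^*$ as an infimum, together with the right-continuity of $\alpha\mapsto\mathcal L^d(\{|f|>\alpha\})$. By the layer-cake formula,
\[
\int_0^\infty (f^*)^p\,dz=\int_0^\infty p\alpha^{p-1}\mathcal L^1(\{f^*>\alpha\})\,d\alpha=\int_{\R^d}|f|^p\,dx .
\]

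\emph{Step 2: a pointwise bound on $f^*$.} Fix $z>0$. Since $f^*$ is non-increasing, $f^*(y)\ge f^*(z)$ for all $y\in[0,z]$. Therefore
\[
z\,f^*(z)^p\le\int_0^z f^*(y)^p\,dy\le\int_0^\infty (f^*)^p\,dy=\|f\|_p^p .
\]
Taking $p$-th roots gives $z^{1/p}f^*(z)\le\|f\|_p$.

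\emph{Conclusion.} Taking the supremum over $z>0$ and using the definition \eqref{Equa:Lorentz_2} with $q=\infty$ gives $\|f\|_{p,\infty}\le\|f\|_p$. In particular every $f\in L^p$ belongs to $L^{p,\infty}$, and the inclusion is continuous with constant $1$.

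The only step needing any care is the equimeasurability identity in Step 1, and it is standard.
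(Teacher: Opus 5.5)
Your proof is correct, but it takes a slightly different route from the one the paper relies on. The paper gives no argument of its own. It cites \cite[Proposition 1.1.6]{grafakos2008classical}, which is Chebyshev's inequality $\alpha^p\,\mathcal L^d(\{|f|>\alpha\})\le\|f\|_p^p$ applied to the distribution-function form of the weak norm, $\sup_{\alpha>0}\alpha\,\mathcal L^d(\{|f|>\alpha\})^{1/p}$. To land on the rearrangement definition \eqref{Equa:Lorentz_2}, one then needs to translate this into a bound on $f^*$. The paper does exactly that, tacitly, in the proof of Lemma \ref{lem:interp}, where it writes $f^*(z)\le z^{-1/r}\|f\|_r$ ``by Chebyshev's inequality.''

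You instead first move to the half-line, via equimeasurability and the layer-cake formula, and then apply a Chebyshev-type bound to the non-increasing function $f^*$. Both routes are elementary, but the direct one is shorter. It needs only the one-sided implication $f^*(z)>\alpha>0\Rightarrow\mathcal L^d(\{|f|>\alpha\})>z$, which is immediate from the definition of $f^*$ as an infimum and requires no right-continuity of the distribution function. Chebyshev then gives $z<\|f\|_p^p\alpha^{-p}$, and letting $\alpha\uparrow f^*(z)$ concludes.

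Your route costs the full equimeasurability identity, but it yields $\|f^*\|_{L^p(0,\infty)}=\|f\|_p$, that is, $\|f\|_{p,p}=\|f\|_p$, as a by-product. The paper also uses that fact without comment, for instance in the ``trivial'' case $p_2=\infty$ of Theorem \ref{Theo:holder_lorentz}.
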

See \cite[Proposition 1.1.6]{grafakos2008classical}
\begin{theorem}[Hardy–Littlewood inequality]
	Given $f_1,f_2$ non-negative functions, it holds
	$$
	\int _{\mathbb {R} ^{n}}f_1(x)f_2(x)\,dx\leq \int _{\mathbb {R} ^{n}}f_1^{*}(z)f_2^{*}(z)\,dz.$$
\end{theorem}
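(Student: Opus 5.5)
The plan is to reduce the inequality to a one-dimensional statement about the distribution functions of $f_1$ and $f_2$, using the layer-cake representation and Fubini. For a non-negative measurable $f$ on $\R^n$ and every $x$ we have
\[
f(x)=\int_0^\infty \mathbf 1_{\{f>s\}}(x)\,ds .
\]
Substituting this for both $f_1$ and $f_2$ and applying Tonelli, which is legitimate because all integrands are non-negative, gives
\[
\int_{\R^n} f_1 f_2\,dx=\int_0^\infty\!\!\int_0^\infty \mathcal L^n\bigl(\{f_1>s\}\cap\{f_2>r\}\bigr)\,ds\,dr .
\]
The right-hand side $\int_0^\infty f_1^*f_2^*\,dz$ has the same structure, with $f_i^*$ in place of $f_i$ and one-dimensional Lebesgue measure on $[0,\infty)$ in place of $\mathcal L^n$.

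The first key step is equimeasurability: for every $s>0$,
\[
\mathcal L^1\bigl(\{z: f^*(z)>s\}\bigr)=\mathcal L^n\bigl(\{|f|>s\}\bigr).
\]
I would check this directly from the definition of $f^*$ given above. Since $f^*$ is non-increasing and right-continuous, the set $\{f^*>s\}$ is an interval $[0,\mu)$. Its length is $\mu=\mathcal L^n(\{|f|>s\})$, by the definition of the infimum together with the right-continuity of the distribution function $\alpha\mapsto\mathcal L^n(\{|f|>\alpha\})$.

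The second key step uses the fact that the sets $\{f_i^*>s\}$ are initial intervals, hence nested. The intersection of two initial intervals is the shorter one, so
\[
\mathcal L^1\bigl(\{f_1^*>s\}\cap\{f_2^*>r\}\bigr)=\min\bigl(\mathcal L^n(\{f_1>s\}),\,\mathcal L^n(\{f_2>r\})\bigr).
\]
On the other side, the measure of an intersection never exceeds the measure of either set. Hence
\[
\mathcal L^n\bigl(\{f_1>s\}\cap\{f_2>r\}\bigr)\le \min\bigl(\mathcal L^n(\{f_1>s\}),\,\mathcal L^n(\{f_2>r\})\bigr).
\]
Integrating this inequality in $(s,r)$ and running the layer-cake identity backwards on the rearranged side yields the claim.

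The only delicate point is the equimeasurability step when level sets have infinite measure, or when $f^*$ is identically $+\infty$. In those cases the inequality holds trivially, either because both sides are infinite or with the convention $\infty\ge$ anything. Otherwise the argument is routine, and one may alternatively cite \cite[Theorem 3.4]{lieb2001analysis}-type references, but I would keep the self-contained proof above.
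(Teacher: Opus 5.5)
Your argument is correct, and it takes a different route from the paper only because the paper gives no proof here: it states the inequality and refers to Hardy--Littlewood--P\'olya \cite{hardy1952inequalities}.

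Your layer-cake argument is the standard self-contained proof. By Tonelli, both sides become integrals over $(s,r)\in(0,\infty)^2$ of the measure of an intersection of superlevel sets. You derive equimeasurability correctly from the paper's definition of $f^*$, through the equivalence $f^*(z)>s\iff\mathcal L^n(\{|f|>s\})>z$, valid for $s>0$ by right-continuity of the distribution function. Since the sets $\{f_i^*>s\}$ are nested initial intervals, this turns the rearranged intersections into the minimum of the two measures. The bound $\mathcal L^n(A\cap B)\le\min\{\mathcal L^n(A),\mathcal L^n(B)\}$ then closes the argument.

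Compared with the citation, your version works directly with the one-dimensional $f^*$ on $[0,\infty)$ that the paper actually defines. It therefore also makes clear that the right-hand side must be read as $\int_0^\infty$, not $\int_{\R^n}$ as printed.

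One correction to your last paragraph. The cases with level sets of infinite measure are not trivial in the way you describe: for $f_1\equiv1$ one gets $f_1^*\equiv1$, and both sides equal $\int f_2$, which may be finite. No separate treatment is needed, though. Equimeasurability, the nested-interval identity and Tonelli all hold verbatim for $[0,\infty]$-valued quantities with the convention $0\cdot\infty=0$, so your main argument already covers these cases.
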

See \cite{hardy1952inequalities}.
\begin{theorem}[H\"older’s inequality in Lorentz spaces]
	\label{Theo:holder_lorentz}
	Let $p\in[1,\infty)$, $ p_1\in [1,\infty), p_2 \in [1,\infty]$ such that 
	$$\frac{1}{p}=\frac{1}{p_1}+\frac{1}{p_2},$$
	then 
	$$
	\|f_1f_2\|_{p} \le  \|f_1\|_{p_1,p} \|f_2\|_{p_2,\infty}.
	$$
\end{theorem}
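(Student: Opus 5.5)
The plan is to reduce everything to the one-dimensional rearranged functions via the Hardy--Littlewood inequality, and then use the pointwise bound on $f_2^*$ that the weak norm provides. Write
\[
\|f_1f_2\|_p^p=\int_{\R^d}|f_1|^p\,|f_2|^p\,dx .
\]
Apply the Hardy--Littlewood inequality to the non-negative functions $|f_1|^p$ and $|f_2|^p$. This gives
\[
\|f_1f_2\|_p^p\le\int_0^\infty (|f_1|^p)^*(z)\,(|f_2|^p)^*(z)\,dz .
\]

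The first step is to check that $(|f|^p)^*=(f^*)^p$. Since $t\mapsto t^p$ is strictly increasing on $[0,\infty)$, we have $\{|f|^p>\alpha^p\}=\{|f|>\alpha\}$ for every $\alpha>0$. The two functions therefore have the same distribution function up to this change of variable, and the definition of the rearrangement as an infimum over level sets yields the identity. This is the only point requiring any care, and I expect it to be routine. We are left with
\[
\|f_1f_2\|_p^p\le\int_0^\infty f_1^*(z)^p\,f_2^*(z)^p\,dz .
\]

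Assume first $p_2<\infty$. By definition of $\|\cdot\|_{p_2,\infty}$ we have the pointwise bound
\[
f_2^*(z)\le z^{-1/p_2}\,\|f_2\|_{p_2,\infty}\qquad\text{for all } z>0 .
\]
Substituting it gives
\[
\|f_1f_2\|_p^p\le\|f_2\|_{p_2,\infty}^p\int_0^\infty f_1^*(z)^p\,z^{-p/p_2}\,dz .
\]
The exponent relation $\frac1p=\frac1{p_1}+\frac1{p_2}$ gives $-\frac p{p_2}=\frac p{p_1}-1$. Hence the remaining integral equals
\[
\int_0^\infty\big[z^{1/p_1}f_1^*(z)\big]^p\,\frac{dz}{z}=\|f_1\|_{p_1,p}^p .
\]
Taking $p$-th roots gives the claim.

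In the endpoint $p_2=\infty$ we have $p_1=p$. Here we use $f_2^*\le\|f_2\|_\infty$ instead, and we observe that
\[
\int_0^\infty (f_1^*)^p\,dz=\|f_1\|_p^p=\|f_1\|_{p,p}^p ,
\]
the first equality because $f_1$ and $f_1^*$ are equimeasurable. This gives the same conclusion.
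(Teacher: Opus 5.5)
Your proposal is correct and follows the paper's proof essentially step for step: Hardy--Littlewood applied to $|f_1|^p$ and $|f_2|^p$, the identity $(|f|^p)^*=(f^*)^p$, the pointwise bound $f_2^*(z)\le z^{-1/p_2}\|f_2\|_{p_2,\infty}$, and the exponent relation that turns the remaining integral into $\|f_1\|_{p_1,p}^p$. Your explicit handling of the endpoint $p_2=\infty$, which the paper dismisses as trivial, is also fine.
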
 

\begin{proof}
	The case $p_2=\infty$ is trivial. For $p_2<\infty$, by Hardy–Littlewood inequality
	\begin{equation*}
		\begin{split}
			\|f_1f_2\|_p^p&\le\int(|f_1|^p)^{*}(|f_2|^p)^{*}\\
			&=\int(f_1^{*})^p(f_2^{*})^p\\
			&\le \sup z^\frac{p}{p_2}(f_2^{*})^p\int (z^\frac{1}{p_1}f^*)^p\frac{dz}{z}\\
			&=\|f_1\|_{p_1,p}^p \|f_2\|_{p_2,\infty}^p,
		\end{split}
	\end{equation*}
	where we used $\left(|f|^{p}\right)^{*}=(f^{*})^{p}$, that is easy to prove.
\end{proof}
\begin{theorem}[Young inequality in Lorentz spaces]
\label{Theo:young_lorentz}
Let $p,p_1\in(1,\infty)$, $p_2\in[1,\infty)$ such that 
	$$\frac{1}{p}+1=\frac{1}{p_1}+\frac{1}{p_2},$$
	then there is a constant $C_Y=C_Y(p,p_1,p_2)$ such that
	$$
	\|f_1\star f_2\|_{p,\infty} \le  C_Y\|f_1\|_{p_1,\infty} \|f_2\|_{p_2}.
	$$
\end{theorem}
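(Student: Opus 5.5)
We prove the weak-type Young inequality $\|f_1\star f_2\|_{p,\infty}\le C_Y\|f_1\|_{p_1,\infty}\|f_2\|_{p_2}$ by real interpolation: first fix $f_1$ and treat $f_2\mapsto f_1\star f_2$ as a linear operator, then obtain weak-type bounds at the two endpoints of a segment of exponents, and finally invoke the Marcinkiewicz interpolation theorem. The case $p_2=1$ is the simplest and we treat it directly. The space $L^{p_1,\infty}$ is normable for $p_1>1$: the quantity $\sup_{|E|<\infty}|E|^{1/p_1-1}\int_E|f|$ is a norm equivalent to $\|f\|_{p_1,\infty}$. Writing $f_1\star f_2=\int f_2(y)f_1(\cdot-y)\,dy$ and using Minkowski's inequality for this equivalent norm together with translation invariance, we get
\[
\|f_1\star f_2\|_{p_1,\infty}\le C(p_1)\|f_1\|_{p_1,\infty}\|f_2\|_1 .
\]
This is the statement with $p=p_1$.

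For $p_2>1$, note that $p>p_1$, so $p_2<p_1'$. We choose two exponents $1\le q_0<p_2<q_1<p_1'$ and define $r_j$ by $1/r_j+1=1/p_1+1/q_j$. The endpoint bounds $\|f_1\star g\|_{r_j,\infty}\lesssim\|f_1\|_{p_1,\infty}\|g\|_{q_j}$ are proved by the classical O'Neil splitting. Normalise $\|f_1\|_{p_1,\infty}=1$ and $\|g\|_{q}=1$, and fix a level $\lambda>0$. Split $f_1=f_1\mathbf 1_{|f_1|>\mu}+f_1\mathbf 1_{|f_1|\le\mu}$. The bounded part satisfies $\|f_1\mathbf 1_{|f_1|\le\mu}\|_{q'}\lesssim\mu^{1-p_1/q'}$, which is finite because $q'>p_1$. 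By Hölder its convolution with $g$ is bounded pointwise by $\lambda/2$ once $\mu\sim\lambda^{r/q'}$ is chosen suitably. The large part lies in $L^1$ with norm $\lesssim\mu^{1-p_1}$, so by Chebyshev and the trivial $L^1\star L^q$ bound the measure of the set where its convolution exceeds $\lambda/2$ is at most $\lambda^{-q}\mu^{q(1-p_1)}$. The exponent bookkeeping then gives $\lesssim\lambda^{-r}$, which is the weak-type estimate.

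With both endpoints in hand, the Marcinkiewicz theorem applied to the linear operator $g\mapsto f_1\star g$ yields the bound at the intermediate exponent $p_2$ with target $L^{p,\infty}$. It even gives the strong $L^p$ bound when $p_2\le p$, which holds here. Linearity in $f_2$ gives the homogeneity, and the constant depends only on $(p,p_1,p_2)$. Since the direct splitting already works at a single exponent, the interpolation step can also be avoided: applying the splitting directly with $q=p_2$ and $r=p$ already gives $\|f_1\star f_2\|_{p,\infty}\le C\|f_1\|_{p_1,\infty}\|f_2\|_{p_2}$. This is the route I would actually write.

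The main obstacle is the exponent arithmetic in the splitting. For $\lambda^{-q}\mu^{q(1-p_1)}$ to equal $\lambda^{-r}$ with $\mu=c\lambda^{r/q'}$, one needs $-q+\frac{rq}{q'}(1-p_1)=-r$. This identity is equivalent to $1/r=1/p_1+1/q-1$, and it must be checked carefully. One must also verify that $q'>p_1$, which is exactly where the hypothesis $p<\infty$ enters. Everything else reduces to standard distribution-function estimates; alternatively one could simply cite \cite{grafakos2008classical}.
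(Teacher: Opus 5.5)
The paper gives no proof of this statement; it only cites \cite[Theorem 1.4.25]{grafakos2008classical}. Your O'Neil-type truncation argument is a standard self-contained proof of that cited result, and its overall structure is sound. The normability argument for $p_2=1$ is correct. You are also right that the Marcinkiewicz detour is unnecessary once the splitting is done directly at $q=p_2$, $r=p$. In fact the same splitting with $q=1$, $q'=\infty$, $\mu\sim\lambda$ also covers $p_2=1$, so normability is not needed either. What the interpolation route buys is the stronger strong-type bound $\|f_1\star f_2\|_p\lesssim\|f_1\|_{p_1,\infty}\|f_2\|_{p_2}$ for $p_2>1$, which the paper never uses.

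However, the computation you single out as the main obstacle is wrong as written. Normalise $\|f_1\|_{p_1,\infty}=\|g\|_q=1$. H\"older bounds the bounded part pointwise by $C\mu^{1-p_1/q'}$. Since $\frac1r=\frac1{p_1}-\frac1{q'}$, one has $1-\frac{p_1}{q'}=\frac{p_1}{r}$. So the requirement $C\mu^{p_1/r}\le\lambda/2$ forces $\mu=c\lambda^{r/p_1}$, not $c\lambda^{r/q'}$; for $q=1$ your choice would even make $\mu$ independent of $\lambda$.

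Your identity $-q+\frac{rq}{q'}(1-p_1)=-r$ is not equivalent to the Young relation. Dividing by $qr$ it reads $\frac1r=1-\frac{p_1}{q'}$. Combined with $\frac1r=\frac1{p_1}-\frac1{q'}$, this forces $q'=p_1$, which is excluded by $q'>p_1$.

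With the correct $\mu$ the exponent of $\lambda$ is $-q+\frac{qr}{p_1}(1-p_1)$. The equation $-q+\frac{qr}{p_1}-qr=-r$ is, after dividing by $qr$, exactly $\frac1r=\frac1{p_1}+\frac1q-1$. Hence $|\{|f_1\star g|>\lambda\}|\lesssim\lambda^{-r}$ and the weak bound closes.

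A smaller slip: $p_2<p_1'$ follows from $p<\infty$, not from $p>p_1$, as you correctly note at the end.
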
 
See, for instance, \cite[Theorem 1.4.25]{grafakos2008classical}.

\begin{lemma}[Interpolation of Lorentz space between Lebesgue spaces]\label{lem:interp}
Let $1\le r_1<r<r_2\le\infty$, let $\theta\in(0,1)$ be defined by
\begin{equation*}
\frac1r=\frac{\theta}{r_1}+\frac{1-\theta}{r_2},
\end{equation*}
and let $s\in[1,\infty]$. Then there is a constant $C_I=C_I(r,r_1,r_2,s)$ such that
\begin{equation*}
\norm{f}_{r,s}\le C_I\norm{f}_{r_1}^{\theta}\norm{f}_{r_2}^{1-\theta}
\qquad\text{for every }f\in L^{r_1}\cap L^{r_2}.
\end{equation*}
\end{lemma}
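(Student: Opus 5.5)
The plan is a direct argument on the rearrangement: split the integral defining $\norm{f}_{r,s}$ at a level $z_0>0$, bound $f^*$ on each piece by the two Lebesgue norms, and then choose $z_0$ to balance the two contributions. The pointwise bounds on $f^*$ needed are
\begin{equation*}
f^*(z)\le z^{-1/r_1}\norm{f}_{r_1},\qquad f^*(z)\le z^{-1/r_2}\norm{f}_{r_2}.
\end{equation*}
Both follow from Chebyshev's inequality: $\mathcal L^d(\{|f|>\alpha\})\le \alpha^{-q}\norm{f}_q^q$ gives $f^*(z)\le z^{-1/q}\norm{f}_q$. Equivalently, they follow from the embedding $L^q\hookrightarrow L^{q,\infty}$ recalled above. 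For $r_2=\infty$ the second bound is simply $f^*\le\norm{f}_\infty$, with $1/r_2=0$.

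Consider first $s<\infty$ and write
\begin{equation*}
\norm{f}_{r,s}^s=\int_0^{z_0}\bigl[z^{1/r}f^*(z)\bigr]^s\frac{dz}{z}+\int_{z_0}^\infty\bigl[z^{1/r}f^*(z)\bigr]^s\frac{dz}{z}.
\end{equation*}
On $(0,z_0)$ I use the $r_2$ bound, so the integrand is at most $\norm{f}_{r_2}^s z^{s(1/r-1/r_2)-1}$. The exponent $1/r-1/r_2$ is positive because $r<r_2$, so the integral converges at $0$ and gives $C\,z_0^{s(1/r-1/r_2)}\norm{f}_{r_2}^s$. On $(z_0,\infty)$ I use the $r_1$ bound, with exponent $1/r-1/r_1<0$, so the integral converges at $\infty$ and gives $C\,z_0^{-s(1/r_1-1/r)}\norm{f}_{r_1}^s$. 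For $s=\infty$ the same two bounds are applied to the supremum on each range.

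Finally I choose $z_0$ so that the two terms match, namely
\begin{equation*}
z_0^{1/r_1-1/r_2}=\frac{\norm{f}_{r_1}}{\norm{f}_{r_2}}.
\end{equation*}
The case $f=0$ is trivial, so I may assume both norms are nonzero. The identity $1/r=\theta/r_1+(1-\theta)/r_2$ gives
\begin{equation*}
\frac1r-\frac1{r_2}=\theta\Bigl(\frac1{r_1}-\frac1{r_2}\Bigr),\qquad \frac1{r_1}-\frac1r=(1-\theta)\Bigl(\frac1{r_1}-\frac1{r_2}\Bigr).
\end{equation*}
Substituting, each term becomes $\norm{f}_{r_1}^{\theta s}\norm{f}_{r_2}^{(1-\theta)s}$, which is the claim with $C_I$ depending only on $r,r_1,r_2,s$.

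The only step needing care is the choice of exponents. Each Lebesgue bound must be used on the side where the resulting power of $z$ is integrable against $dz/z$: the $r_2$ bound near $0$ and the $r_1$ bound near $\infty$. Reversing them makes both integrals diverge. The strict inequalities $r_1<r<r_2$ are exactly what guarantee this convergence, and the rest is bookkeeping.
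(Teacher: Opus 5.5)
Your proof is correct and follows essentially the same route as the paper: Chebyshev bounds $f^*(z)\le z^{-1/r_i}\norm{f}_{r_i}$, splitting the Lorentz integral at a level $\bar z$ with the $r_2$ bound on $(0,\bar z)$ and the $r_1$ bound on $(\bar z,\infty)$, and the same balancing choice $\bar z^{\,1/r_1-1/r_2}=\norm{f}_{r_1}/\norm{f}_{r_2}$. Your explicit handling of $f=0$ and your check of the exponents through $\theta$ are small details that the paper leaves implicit.
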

 
\begin{proof}
By Chebyshev's inequality, $f^*(z)\le z^{-1/r_i}\norm{f}_{r_i}$ for $i=1,2$, with the convention $z^{-1/\infty}=1$. Let $s<\infty$ and $\bar z>0$. Splitting the integral in \eqref{Equa:Lorentz_2} at $\bar z$ and using the bound with $i=2$ on $(0,\bar z)$ and with $i=1$ on $(\bar z,\infty)$,
\begin{equation*}
\norm{f}_{r,s}^s\le\frac{\norm{f}_{r_2}^s}{s\bigl(\frac1r-\frac1{r_2}\bigr)}\,
\bar z^{\,s(\frac1r-\frac1{r_2})}
+\frac{\norm{f}_{r_1}^s}{s\bigl(\frac1{r_1}-\frac1r\bigr)}\,
\bar z^{-s(\frac1{r_1}-\frac1r)}.
\end{equation*}
 Choosing
\begin{equation*}
\bar z=\Bigl(\frac{\norm{f}_{r_1}}{\norm{f}_{r_2}}\Bigr)^{(\frac1{r_1}-\frac1{r_2})^{-1}}
\end{equation*}
makes the two terms equal and gives the statement. The case $s=\infty$ follows analogously from the same two bounds, taking the supremum separately on $(0,\bar z)$ and on $(\bar z,\infty)$.
\end{proof}

\begin{proposition}[Gagliardo--Nirenberg interpolation inequality for Lorentz space,\cite{nirenberg1959elliptic}]
\label{Prop:gagliardo}Let $q,r\in[1,\infty]$, let $j$ and $m$ non-negative integers such that $j<m$. Then for $\theta \in [0,1]$ such that

\begin{equation*}
    \frac {1}{p}={\dfrac {j}{d}}+\theta \left({\dfrac {1}{r}}-{\dfrac {m}{d}}\right)+{\dfrac {1-\theta }{q}},\qquad {\dfrac {j}{m}}\leq \theta \leq 1
\end{equation*}
there exists a constant $C_G=C_G(q,r,j,m,d)$ such that for every function $f$
\begin{equation*}
    \|D^{j}f\|_{L^{p}(\mathbb {R} ^{d})}\leq C_G\|D^{m}f\|_{L^{r}(\mathbb {R} ^{d})}^{\theta }\|f\|_{L^{q}(\mathbb {R} ^{d})}^{1-\theta }
\end{equation*}
with two further conditions:
\begin{enumerate}
    \item if $j=0$  $q=+\infty$  and $rm<d$, then we need to assume that either $f$ tends to $0$ at infinity, or  $f\in L^{s}(\mathbb {R}^{d})$ for some finite value of $s$,
    \item if $r>1$ and  $m-j-{\frac {d}{r}}$ is a non-negative integer, then we need to assume that $\frac {j}{m}\leq \theta <1$.
\end{enumerate}
Moreover if $p\ne \infty$ and $\theta\in(\frac{j}{m},1)$, for every $\alpha\ge 1$, it holds
\begin{equation*}
    \|D^{j}f\|_{L^{p,\alpha}(\mathbb {R} ^{d})}\leq C_G\|D^{m}f\|_{L^{r}(\mathbb {R} ^{d})}^{\theta }\|f\|_{L^{q}(\mathbb {R} ^{d})}^{1-\theta }
\end{equation*}
\end{proposition}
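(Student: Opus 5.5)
The statement consists of two parts. The first is the classical Gagliardo--Nirenberg inequality in Lebesgue norms, which I will quote from \cite{nirenberg1959elliptic} rather than reprove. The work is in the Lorentz refinement: for $p\neq\infty$ and $\theta\in(\frac jm,1)$, the norm $\|D^jf\|_{L^p}$ may be replaced by $\|D^jf\|_{L^{p,\alpha}}$ for every $\alpha\ge1$. I will derive this from the Lebesgue version with \Cref{lem:interp}, by interpolating between two slightly different Lebesgue exponents on either side of $p$.

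\textbf{Step 1.} Since $\theta$ lies in the open interval $(\frac jm,1)$, I perturb $\theta$ to $\theta_1<\theta<\theta_2$, still inside $(\frac jm,1)$ and with both still admissible for the Lebesgue statement. The defining relation
\[
\frac1{p_i}=\frac jd+\theta_i\Bigl(\frac1r-\frac md\Bigr)+\frac{1-\theta_i}{q}
\]
is affine in $\theta_i$, so it produces exponents $p_1,p_2$ with $p$ strictly between them. I assume the nondegenerate case $\frac1r-\frac md\neq\frac1q$; otherwise $p$ does not depend on $\theta$ and a separate treatment is needed. I choose the perturbation small enough that both $p_i\in[1,\infty]$. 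Relabelling if necessary, I take $p_1<p<p_2$. Affinity also gives the interpolation parameter $\lambda\in(0,1)$ defined by $\frac1p=\frac\lambda{p_1}+\frac{1-\lambda}{p_2}$, and it satisfies $\theta=\lambda\theta_1+(1-\lambda)\theta_2$.

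\textbf{Step 2.} I apply \Cref{lem:interp} with $s=\alpha$:
\[
\|D^jf\|_{p,\alpha}\le C_I\|D^jf\|_{p_1}^{\lambda}\|D^jf\|_{p_2}^{1-\lambda}.
\]
Then I bound each factor with the Lebesgue Gagliardo--Nirenberg inequality:
\[
\|D^jf\|_{p_i}\le C\|D^mf\|_r^{\theta_i}\|f\|_q^{1-\theta_i}.
\]
Multiplying the two bounds, the exponents combine to $\lambda\theta_1+(1-\lambda)\theta_2=\theta$ on $\|D^mf\|_r$ and to $1-\theta$ on $\|f\|_q$, which is the claimed inequality. If $\alpha\ge p$, a shortcut is available, since $L^p=L^{p,p}\hookrightarrow L^{p,\alpha}$ gives the result directly. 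The interpolation argument is needed for $\alpha<p$, and it covers all $\alpha$ uniformly.

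\textbf{Main obstacle.} The delicate point is checking that the perturbed pairs $(p_i,\theta_i)$ still satisfy the side conditions of the Lebesgue theorem. Condition (2) is harmless, because $\theta_i<1$ is kept. Condition (1) concerns $p_i=\infty$ when $j=0$; this cannot occur for small perturbations, because $p<\infty$ and $p$ is strictly between $p_1$ and $p_2$. Separately, when $p$ is near $1$ I must make sure $p_1\ge1$; choosing the perturbation small handles this. Finally, \Cref{lem:interp} requires $D^jf\in L^{p_1}\cap L^{p_2}$, and this is exactly what the two Lebesgue estimates provide whenever their right-hand sides are finite.
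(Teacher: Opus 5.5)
Your proposal is essentially the paper's own argument: interpolate $\norm{D^jf}_{p,\alpha}$ between two nearby Lebesgue norms with \Cref{lem:interp}, bound each factor by the classical inequality with exponents $\theta_1,\theta_2$, and use the affine dependence on $\theta$ to see that the exponents recombine to $\theta$ and $1-\theta$. Your caveat about the degenerate case $\frac1r-\frac md=\frac1q$ is well founded, and the paper's argument also needs it implicitly, since it solves for $\theta_1,\theta_2$ from $p_1,p_2$; however, no separate treatment can exist there for $\alpha<p$, because the claim is false: for $d=2$, $j=0$, $m=r=1$, $q=p=2$, the functions $f_n(x)=\sum_{i=1}^n\lambda_i\phi(\lambda_ix)$ with widely separated $\lambda_i$ satisfy $\norm{f_n}_{2,1}\gtrsim n$ while $\norm{\nabla f_n}_1^{\theta}\norm{f_n}_2^{1-\theta}\lesssim n^{(1+\theta)/2}$, so the Lorentz statement must be read in the nondegenerate case, which is the only one the paper uses later.
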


The classical Gagliardo Nirenberg inequality
\begin{equation*}
    \|D^{j}f\|_{L^{p}(\mathbb {R} ^{d})}\leq C_G\|D^{m}f\|_{L^{r}(\mathbb {R} ^{d})}^{\theta }\|f\|_{L^{q}(\mathbb {R} ^{d})}^{1-\theta }
\end{equation*}
can be found in \cite{nirenberg1959elliptic}.
To generalize the inequality we can use the interpolation
\begin{equation*}
    \|D^{j}f\|_{L^{p,\alpha}}\le C_I \|D^{j}f\|_{L^{p_1}}^{\hat\theta}\|D^{j}f\|_{L^{p_2}}^{1-\hat\theta}.
\end{equation*}
with 
\begin{equation*}
    p_1< p <p_2\quad\text{and}\quad \frac{1}{p}=\frac{\hat\theta}{p_1}+\frac{1-\hat \theta}{p_2},
\end{equation*} $p,p_1,p_2$ close enough. Then we can apply Gagliardo Nirenberg for both $\|D^{j}f\|_{L^{p_1}}, \|D^{j}f\|_{L^{p_2}}$ w.r.t. $\|D^{m}f\|_{L^{r}},\|f\|_{L^{q}}$: by denoting $\theta_1,\theta_2$ as exponents such that
\begin{equation*}
    \frac {1}{p_1}={\dfrac {j}{d}}+\theta_1 \left({\dfrac {1}{r}}-{\dfrac {m}{d}}\right)+{\dfrac {1-\theta_1 }{q}},
\end{equation*}
\begin{equation*}
    \frac {1}{p_2}={\dfrac {j}{d}}+\theta_2 \left({\dfrac {1}{r}}-{\dfrac {m}{d}}\right)+{\dfrac {1-\theta_2 }{q}},
\end{equation*}
by classical Gagliardo Nirenberg
\begin{equation*}
    \begin{split}
            \|D^{j}f\|_{L^{p,\alpha}}&\le C_I \|D^{j}f\|_{L^{p_1}}^{\hat\theta}\|D^{j}f\|_{L^{p_2}}^{1-\hat\theta}\\
            &\le C_{GL}\|D^{m}f\|_{L^{r}(\mathbb {R} ^{d})}^{\theta_1\hat \theta+\theta_2(1-\hat \theta)}\|f\|_{L^{q}(\mathbb {R} ^{d})}^{(1-\theta_1)\hat\theta+(1-\theta_2)(1-\hat\theta) },
    \end{split}
\end{equation*}
with $C_{GL}=C_{I}C_{G}C_{G}$. It is easy to prove that
\begin{equation*}
    \theta_1\hat \theta+\theta_2(1-\hat \theta)=\theta,\quad (1-\theta_1)\hat\theta+(1-\theta_2)(1-\hat\theta)=1-\theta.
\end{equation*}

\subsection{The heat kernel} Denote
\begin{equation*}
  G(t,x)=\frac1{(4\pi t)^{d/2}}
  \exp\left(-\frac{|x|^2}{4t}\right)
\end{equation*}
the heat kernel. It is easy to prove with a direct computation or by interpolation the following lemma.
\begin{lemma}\label{lem:heat}
For every $r\in[1,\infty)$ there is $C_{HK}=C_{HK}(d,r)$ with
\begin{equation}\label{eq:heatkernel}
\norm{\nabla G_t}_{r,1}\le C_{HK}\,t^{-\frac12-\frac d2\left(1-\frac1r\right)},\qquad t>0 .
\end{equation}
\end{lemma}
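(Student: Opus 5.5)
The plan is to use the scaling of the Gaussian to reduce to $t=1$, and then to estimate the Lorentz norm $\norm{\cdot}_{r,1}$ of the fixed function $\nabla G_1$ by \Cref{lem:interp}.

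\textbf{Scaling.} We have $G(t,x)=t^{-d/2}G(1,x/\sqrt t)$, hence
\begin{equation*}
\nabla G_t(x)=t^{-\frac d2-\frac12}(\nabla G_1)(x/\sqrt t).
\end{equation*}
For a dilation $f_\lambda(x)=f(x/\lambda)$ the distribution function scales by $\lambda^d$. Therefore $f_\lambda^*(z)=f^*(\lambda^{-d}z)$, and a change of variables in \eqref{Equa:Lorentz_2} gives
\begin{equation*}
\norm{f_\lambda}_{r,1}=\lambda^{d/r}\norm{f}_{r,1}.
\end{equation*}
Taking $\lambda=\sqrt t$ yields
\begin{equation*}
\norm{\nabla G_t}_{r,1}=t^{-\frac d2-\frac12+\frac d{2r}}\norm{\nabla G_1}_{r,1},
\end{equation*}
which is exactly the exponent $-\frac12-\frac d2(1-\frac1r)$ in \eqref{eq:heatkernel}. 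It remains to show that $C_{HK}:=\norm{\nabla G_1}_{r,1}<\infty$.

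\textbf{Finiteness of the constant.} The function $\nabla G_1(x)=-\frac{x}{2}(4\pi)^{-d/2}e^{-|x|^2/4}$ is bounded and decays like a Gaussian, so it lies in $L^{q}$ for every $q\in[1,\infty]$.
\begin{itemize}
\item If $r\in(1,\infty)$, choose $r_1=1$ and $r_2=\infty$ in \Cref{lem:interp} with $s=1$. This gives $\norm{\nabla G_1}_{r,1}\le C_I\norm{\nabla G_1}_1^{\theta}\norm{\nabla G_1}_\infty^{1-\theta}<\infty$.
\item If $r=1$, \Cref{lem:interp} does not apply, since it needs $r_1<r$. But $\norm{f}_{1,1}=\int_0^\infty f^*\,dz=\norm{f}_1$, so the bound follows directly from $\nabla G_1\in L^1$.
\end{itemize}

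\textbf{Main obstacle.} The only delicate point is that the $(r,1)$ quasi-norm is taken componentwise or of $|\nabla G|$; both are fine, since each is bounded by a constant times the other. An alternative is a direct computation, using the explicit rearrangement of the radial function $|x|e^{-|x|^2/4}$, but it is not needed.
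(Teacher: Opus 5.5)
Your proof is correct and follows the interpolation route that the paper only mentions in passing (``direct computation or by interpolation''). The dilation identity $\norm{f(\cdot/\lambda)}_{r,1}=\lambda^{d/r}\norm{f}_{r,1}$ gives the exact power of $t$, and \Cref{lem:interp} with $r_1=1$, $r_2=\infty$, $s=1$ (together with $\norm{f}_{1,1}=\norm{f}_1$ at $r=1$) shows the constant is finite; equivalently, you could skip the rescaling and apply \Cref{lem:interp} at each fixed $t$ to $\norm{\nabla G_t}_1\simeq t^{-1/2}$ and $\norm{\nabla G_t}_\infty\simeq t^{-(d+1)/2}$ with $\theta=1/r$, which yields the same exponent directly.
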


\section{Local existence and continuation}\label{sec:local}
The purpose of this section is to demonstrate the well-posedness and non-negativity of the solution, as well as to justify certain regularity assumptions that will prove useful later in the article.
\subsection{Local well-posedness}
We prove the local well-posedness of the solution.

\begin{proposition}[Local bounded solutions]\label{prop:local}
There is a maximal time $T_*\in(0,\infty]$ and a unique solution
\begin{equation*}
  u\in C([0,T_*);L^1(\R^d))
  \cap L^\infty_{\mathrm{loc}}
  ([0,T_*)\times\R^d)
\end{equation*}
satisfying
\begin{equation}\label{eq:duhamel}
  u(t)=G(t)*u_0-
  \int_0^t\nabla G(t-s)*\bigl(b(s)u(s)^{1+k}\bigr)\,d s.
\end{equation}
The solution conserves the mass.  
\end{proposition}

\begin{proof}
The proof holds by showing that the Duhamel's formula \eqref{eq:duhamel} describes the fixed point of the contraction operator
$$\Phi[u](t)=G(t)*u_0-
  \int_0^t\nabla G(t-s)*\bigl(b(s)u(s)^{1+k}\bigr)\,d s
$$ in norm $L^\infty\cap L^1$ on the set 
$$
S = \left\{ u \in {C([0,T], L^1(\R^d))\cap L^\infty([0,T],L^\infty(\R^d))}: \|u\|_\infty \le R,\, u(0) = u_0 \right\}.
$$
First, for $u,v\in S$, we compute 
\begin{equation*}
    \norm{b u^{k+1}-b v^{k+1}}_{p,\infty}\le B_T (1+k)R^k \norm{u-v}_\infty,
\end{equation*}
and by Holder
\begin{equation}\label{eq:loc2}
\begin{split}
\norm{b\bigl(u^{k+1}-v^{k+1}\bigr)}_{1}
&\le B_T\norm{u^{k+1}-v^{k+1}}_{p/(p-1),1}\\
&\le (1+k)R^k B_T \norm{u-v}_1^{1-\frac1p}\norm{u-v}_\infty^{\frac1p}\\
&\le (1+k)R^k B_T\bigl(\norm{u-v}_1+\norm{u-v}_\infty\bigr).
\end{split}
\end{equation}
Moreover
\begin{equation*}
\norm{\nabla G_{t-s}\star u}_\infty\le\norm{\nabla G_{t-s}}_{p/(p-1),1}\norm{u}_{p,\infty}
\le C_{HK}(t-s)^{-\frac12-\frac d{2p}}\norm{u}_{p,\infty},
\end{equation*}
and
\begin{equation*}
\norm{\nabla G_{t-s}\star u}_1\le\norm{\nabla G_{t-s}}_{1}\norm{u}_1\le C_{HK}(t-s)^{-\frac12}\norm{u}_1 .
\end{equation*}
Using these estimates, it is easy to prove the following facts, that conclude the proof:
\begin{enumerate}
\item $\lim_{\varepsilon\to 0^+} \|\Phi[u](t+\varepsilon) - \Phi[u](t)\|_\infty =0,$
\item for every $t_0>0$, $\lim_{\varepsilon\to 0^+} \|\Phi[u](\cdot+\varepsilon) - \Phi[u](\cdot)\|_\infty =0$ uniformly on $[t_0,T],$
\item  $\lim_{\varepsilon\to 0^+} \|\Phi[u](\cdot+\varepsilon) - \Phi[u](\cdot)\|_1 =0$ uniformly on $[0,T]$,
\item $\Phi[u](t)$ is a contraction in norm $L^1$ and in norm $L^\infty$,
\item for $T\ll1$ it holds $\Phi(S)\subseteq S$
\item solution preserves the mass
\end{enumerate}
See also \cite{leccese2026existence} for similar computations and more details. This completes the proof. 
\end{proof}

\subsection{Energy regularity estimate}
We prove a standard energy regularity.
\begin{proposition}\label{prop:energy_reg}
  It holds for $T<T^*$
\begin{equation*}
    u\in L^\infty((0,T),L^2(\mathbb R^d))
    \cap L^2((0,T),H^1(\mathbb R^d)),
    \qquad
    \partial_tu\in L^2((0,T),H^{-1}(\mathbb R^d)).
\end{equation*}
\end{proposition}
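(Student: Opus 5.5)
We prove the statement by an energy identity obtained from testing \eqref{eq:pde} with $u$. Since $T<T_*$, Proposition \ref{prop:local} gives
\[
M:=\sup_{[0,T]}\norm{u(t)}_\infty<\infty,\qquad \norm{u(t)}_1=\norm{u_0}_1 .
\]
Interpolation then yields $u\in L^\infty((0,T),L^2)$ at once, with $\norm{u(t)}_2^2\le M\norm{u_0}_1$. So the content of the proposition is the gradient bound and the bound on $\partial_t u$.

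\textbf{Formal energy inequality.} Multiplying by $u$ and integrating by parts gives
\[
\frac12\frac{d}{dt}\norm{u}_2^2+\norm{\nabla u}_2^2=\int b\,u^{1+k}\cdot\nabla u\,dx .
\]
We bound the right-hand side using Theorem \ref{Theo:holder_lorentz} with exponents $\frac12=\frac1p+\frac1{p'}$:
\[
\Bigl|\int b\,u^{1+k}\cdot\nabla u\Bigr|
\le \norm{b\,u^{1+k}}_2\norm{\nabla u}_2
\le B_T\norm{u^{1+k}}_{p',2}\norm{\nabla u}_2 .
\]
Lemma \ref{lem:interp} with $r_1=1$, $r_2=\infty$ and $p'\in(2,\infty)$ controls $\norm{u^{1+k}}_{p',2}$ by a constant depending on $M^{1+k}$ and $\norm{u_0}_1$. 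Young's inequality absorbs half of $\norm{\nabla u}_2^2$, and integrating on $(0,T)$ gives
\[
\sup_{t\le T}\norm{u(t)}_2^2+\int_0^T\norm{\nabla u}_2^2\,dt\le \norm{u_0}_2^2+C\,T\,B_T^2\,C(M,\norm{u_0}_1).
\]

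\textbf{Bound on $\partial_t u$.} In the distributional sense,
\[
\partial_t u=\dive\bigl(\nabla u-b\,u^{1+k}\bigr).
\]
Both $\nabla u$ and $b\,u^{1+k}$ lie in $L^2((0,T),L^2)$, the latter by the bound just proved. Hence $\partial_t u\in L^2((0,T),H^{-1})$.

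\textbf{Main obstacle: rigour.} A priori $u$ is only a mild solution in $C([0,T];L^1)\cap L^\infty$, so testing with $u$ is not yet legitimate. I would regularise by replacing $b$ with bounded smooth $b_\varepsilon=\rho_\varepsilon*(b\mathbf 1_{|b|\le 1/\varepsilon})$ and $u_0$ with a mollified datum. The corresponding solutions $u_\varepsilon$ are classical, and the estimates above hold for them uniformly in $\varepsilon$. The delicate point is the uniformity of the $L^\infty$ bound $M$ on $[0,T]$.

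To secure it, I would use the contraction argument of Proposition \ref{prop:local}, which depends only on $B_T$, $R$ and the data. This gives uniform local bounds and convergence $u_\varepsilon\to u$ on short intervals, which can be iterated up to $T$ by continuity in $\varepsilon$. The bounds then pass to the limit by weak-$*$ and weak lower semicontinuity.

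An alternative avoiding regularisation is to estimate $\nabla u$ directly from \eqref{eq:duhamel} using Lemma \ref{lem:heat} and Theorem \ref{Theo:young_lorentz}, viewing $\nabla G_{t-s}*(b u^{1+k})$ as a singular-in-time integral in $L^2$. The time integrability is borderline there, however, so I would prefer the approximation route.
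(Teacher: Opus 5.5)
Your formal estimate is correct and is the one the paper uses: Hölder in Lorentz spaces with $\frac12=\frac1p+\frac1{p'}$, together with Lemma \ref{lem:interp}, gives $bu^{1+k}\in L^\infty((0,T),L^2)$ with a bound in terms of $B_T$, $M$ and $\norm{u_0}_1$.

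The gap is in your justification, at the step you yourself flag. The estimates of Proposition \ref{prop:local} see the field only through $\norm{b}_{p,\infty}$, and your $b_\varepsilon$ does not converge to $b$ in $L^{p,\infty}$. For instance, for $b=x|x|^{-1-d/p}$ the bound $|b_\varepsilon|\le 1/\varepsilon$ forces $\norm{b-b_\varepsilon}_{p,\infty}\ge\norm{b}_{p,\infty}$ for every $\varepsilon$. The contraction argument therefore gives bounds on $u_\varepsilon$ on an interval whose length depends on $R$, but it does not give $u_\varepsilon\to u$. Without convergence in $L^\infty$ at the restart times you cannot keep the radius near $M$: it may grow at each restart, and the step lengths need not add up to $T$. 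A uniform bound on $[0,T]$ for the regularised nonlinear problem is as hard as the a priori estimate itself unless you exploit closeness to $u$. To make your route work you would need three further pieces, none of which is in your sketch:
\begin{enumerate}
\item a stability estimate in a topology where $b_\varepsilon\to b$, e.g.\ $(b_\varepsilon-b)u^{1+k}\to0$ in $L^1\cap L^q$ with $d<q<p$, by dominated convergence using the maximal function of $b$;
\item a bootstrap on $\sup_t\norm{u_\varepsilon(t)}_\infty\le 2M$;
\item identification of the limit with $u$.
\end{enumerate}

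The idea you are missing is that no nonlinear approximation is needed. Since $T<T_*$, $u$ is already bounded, so $I:=bu^{1+k}$ is a \emph{fixed} function in $L^2((0,T)\times\R^d)$, by your own bound. Formula \eqref{eq:duhamel} then says that $u$ is the mild solution of the \emph{linear} problem $\partial_t z=\Delta z-\dive I$ with $z(0)=u_0\in L^2$. For this linear problem, membership in $L^\infty L^2\cap L^2H^1$, $\partial_t z\in L^2H^{-1}$ and the energy identity are classical, and this is exactly the paper's proof.

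This also removes the obstruction you saw in the direct Duhamel route. The singularity is only borderline pointwise in time. In $L^2_tL^2_x$, Plancherel reduces it, for each frequency $\xi$, to a time convolution with $|\xi|^2e^{-t|\xi|^2}$, whose $L^1_t$ norm is at most $1$. Young's inequality in time then gives $\norm{\nabla u}_{L^2((0,T)\times\R^d)}\le\norm{u_0}_2+\norm{I}_{L^2((0,T)\times\R^d)}$, and $\partial_t u=\dive(\nabla u-I)\in L^2((0,T),H^{-1})$ follows.
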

\begin{proof}
On every interval $[0,T]$ on which the solution is well defined in $L^1\cap L^\infty$, the function satisfies
\begin{equation}\label{eq:flux}
bu^{1+k}\in L^\infty\bigl([0,T],L^1\cap L^2\bigr).
\end{equation}
Indeed it holds
\begin{equation*}
\begin{split}   
\norm{bu^{1+k}}_2&\le \norm{b}_{p,\infty}\norm{u^{1+k}}_{p',2}\\
&= \norm{b}_{p,\infty}\norm{u}_{(1+k)p',\,2(1+k)}^{1+k}\\
&\le  C_I\norm{b}_{p,\infty}\|u\|_1^{1/p'}\|u\|_\infty^{1+k-1/p'},\\
&\le  C_I\norm{b}_{p,\infty}\bigl(\norm{u}_1+\norm{u}_\infty\bigr)^{1+k},
\end{split}
\end{equation*}
and similarly $\norm{bu^{1+k}}_1\le \norm{b}_{p,\infty}\norm{u^{1+k}}_{p/(p-1),1}<\infty$.
 
For an equation of the form $\partial_tz=\Delta z-\text{div } I$ with $z(0)\in L^2$ and $I=bu^{1+k}\in L^2\bigl((0,S)\times\R^d\bigr)$, a standard energy argument for the heat equation gives
\begin{equation*}
z\in L^\infty\bigl([0,S],L^2\bigr)\cap L^2\bigl(0,S;H^1\bigr),\qquad \partial_tz\in L^2\bigl([0,S],H^{-1}\bigr),
\end{equation*}
together with the identity
\begin{equation}\label{eq:energyid}
\frac12\norm{z(t)}_2^2+\int_0^t\norm{\nabla z(s)}_2^2\ ds
=\frac12\norm{z(0)}_2^2+\int_0^t\ \int I\cdot\nabla z\ dx\ ds .
\end{equation}
The claim follows by taking $z=u$.
\end{proof}
\subsection{Positiveness}
Now we prove that the solution is always non-negative.
\begin{proposition}
  The solution defined in Proposition \ref{prop:local} is non-negative
\end{proposition}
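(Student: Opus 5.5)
We test the equation against the negative part $u_-=\max\{-u,0\}$ and show that $\frac{d}{dt}\norm{u_-(t)}_2^2\le 0$. Since $u_-(0)=(u_0)_-=0$, this forces $u_-\equiv0$ on every interval $[0,T]$ with $T<T_*$. One preliminary point: the nonlinearity $u^{1+k}$ must make sense for possibly negative values a priori. So we read it in Proposition \ref{prop:local} as $|u|^{k}u$ (or $(u_+)^{1+k}$). The fixed-point argument is unchanged by this choice, since the map is still locally Lipschitz with constant $(1+k)R^k$. With the choice $(u_+)^{1+k}$ the flux vanishes identically on $\{u<0\}$, and that is the version we prefer.

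By Proposition \ref{prop:energy_reg}, $u\in L^2((0,T),H^1)$ and $\partial_tu\in L^2((0,T),H^{-1})$. Hence $u_-\in L^2((0,T),H^1)$ with $\nabla u_-=-\nabla u\,\mathbf 1_{\{u<0\}}$. The Lions--Magenes chain rule then gives, for a.e.\ $t$,
\begin{equation*}
\frac12\frac{d}{dt}\norm{u_-(t)}_2^2=-\langle\partial_tu,u_-\rangle .
\end{equation*}
Substituting the weak form of \eqref{eq:pde}, with flux $I=b\,(u_+)^{1+k}\in L^2$ by \eqref{eq:flux}, we obtain
\begin{equation*}
\frac12\frac{d}{dt}\norm{u_-}_2^2=-\norm{\nabla u_-}_2^2-\int I\cdot\nabla u_-\,dx .
\end{equation*}
The last integral vanishes because $I=0$ on $\{u<0\}$, while $\nabla u_-=0$ a.e.\ on $\{u\ge0\}$. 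Therefore $\norm{u_-(t)}_2\le\norm{(u_0)_-}_2=0$.

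If instead one insists on the odd extension $|u|^ku$, the cross term is not zero, and we must bound it. On $\{u<0\}$ we have $|I|\le|b|\,\norm{u}_\infty^k\,u_-$, so the term $\int|b|\norm{u}_\infty^k u_-|\nabla u_-|$ is estimated by Theorem \ref{Theo:holder_lorentz}:
\begin{equation*}
\norm{b\,u_-}_2\le\norm{b}_{p,\infty}\norm{u_-}_{p',2}.
\end{equation*}
Next we use the Lorentz Gagliardo--Nirenberg inequality (Proposition \ref{Prop:gagliardo}). Since $p>d$, we have $\frac1{p'}=\frac12-\frac1p>\frac12-\frac1d$, so
\begin{equation*}
\norm{u_-}_{p',2}\le C\norm{\nabla u_-}_2^{\theta}\norm{u_-}_2^{1-\theta},\qquad \theta=\frac dp<1 .
\end{equation*}
Young's inequality then yields
\begin{equation*}
\frac{d}{dt}\norm{u_-}_2^2\le C(R,B_T)\,\norm{u_-}_2^2 ,
\end{equation*}
and Gronwall's lemma concludes as before.

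The main obstacle is justifying these energy identities: testing by $u_-$ and the chain rule for $\norm{u_-}_2^2$ at the available regularity. We handle this with the standard Lions--Magenes/Stampacchia lemma, using only the regularity of Proposition \ref{prop:energy_reg}. We also check that the weak formulation holds for the mild solution \eqref{eq:duhamel}. This follows because the mild solution of $\partial_tz=\Delta z-\dive I$ with $I\in L^2$ coincides with the unique energy solution.

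Finally, uniqueness in Proposition \ref{prop:local} shows that the non-negative solution constructed with $(u_+)^{1+k}$ is the solution.
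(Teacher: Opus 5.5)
Your proposal is correct and follows the paper's argument: the paper also runs the fixed point with $F(z)=(z_+)^{1+k}$, notes that the flux vanishes on $\{u<0\}$, tests with $-u_-$ to get $\frac12\norm{u_-(t)}_2^2+\int_0^t\norm{\nabla u_-(s)}_2^2\,ds\le 0$, and concludes $u\ge0$ and $F(u)=u^{1+k}$. Your Lions--Magenes justification of the truncated energy identity is more careful than the paper's appeal to \eqref{eq:energyid}, while the odd-extension Gronwall variant mirrors the proof of Lemma \ref{lem:comparison} and is correct but not needed.
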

\begin{proof}
Using the Duhamel local fixed point with $F(z)=(z_+)^{1+k}$, the function $bu^{1+k}$ vanishes on $\{u<0\}$, so testing the equation with $-u_-$ and using \eqref{eq:energyid} gives
\begin{equation*}
\frac12\norm{u_-(t)}_2^2+\int_0^t\norm{\nabla u_-(s)}_2^2\ ds\le0,
\end{equation*}
whence $u\ge0$ and $F(u)=u^{1+k}$.
\end{proof}

\subsection{Comparison and conservation of the increment mass}\label{ssec:comparison}
 
\begin{lemma}\label{lem:comparison}
Let $u,v$ be solutions of \eqref{eq:pde}, and assume $u_0\ge v_0\ge0$. Then $u\ge v$ and
\begin{equation}\label{eq:consdelta}
\norm{u(t)-v(t)}_1=\int\bigl(u_0-v_0\bigr)\ dx .
\end{equation}
\end{lemma}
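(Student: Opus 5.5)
Both solutions live on a common interval $[0,T]$ with $T$ below both maximal times, and on it $\norm{u(t)}_\infty+\norm{v(t)}_\infty\le R$. The plan is to show that the negative part of the difference vanishes, using the same energy argument that gave positiveness, and then to read off \eqref{eq:consdelta} from mass conservation.

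\textbf{Step 1: the equation for the difference.} Set $w=u-v$. By Proposition \ref{prop:energy_reg}, applied to $u$ and to $v$,
\begin{equation*}
w\in L^\infty(0,T;L^2)\cap L^2(0,T;H^1),\qquad \partial_t w\in L^2(0,T;H^{-1}).
\end{equation*}
It solves $\partial_t w=\Delta w-\dive I$ with flux $I=b\,(u^{1+k}-v^{1+k})$. This flux lies in $L^\infty(0,T;L^2)$, because the estimate \eqref{eq:flux} applies to each of $bu^{1+k}$ and $bv^{1+k}$ separately.

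\textbf{Step 2: energy estimate for $w_-$.} We test the equation with $-w_-$, which is admissible by the Lions--Magenes chain rule in the regularity class above. Since $w_-(0)=(u_0-v_0)_-=0$, we get
\begin{equation*}
\frac12\norm{w_-(t)}_2^2+\int_0^t\norm{\nabla w_-}_2^2\,ds=\int_0^t\!\!\int_{\{w<0\}} b\,(v^{1+k}-u^{1+k})\cdot\nabla w_-\,dx\,ds .
\end{equation*}
On $\{w<0\}$ we have $0\le v^{1+k}-u^{1+k}\le (1+k)R^k w_-$. So the right-hand side is bounded by
\begin{equation*}
(1+k)R^k\int_0^t\norm{b\,w_-}_2\,\norm{\nabla w_-}_2\,ds .
\end{equation*}

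\textbf{Step 3: bounding $\norm{b\,w_-}_2$ (main obstacle).} This is the delicate point, since $b$ is only in $L^{p,\infty}$. Theorem \ref{Theo:holder_lorentz} gives $\norm{b\,w_-}_2\le B_T\norm{w_-}_{p',2}$. Since $p>d$, we have $p'=2p/(p-2)<2d/(d-2)$, so the Lorentz Gagliardo--Nirenberg inequality of Proposition \ref{Prop:gagliardo} applies with $j=0$, $m=1$, $r=q=2$ and $\theta=d/p\in(0,1)$:
\begin{equation*}
\norm{w_-}_{p',2}\le C_{GL}\norm{\nabla w_-}_2^{d/p}\norm{w_-}_2^{1-d/p}.
\end{equation*}
Because $d/p<1$, Young's inequality absorbs the gradient:
\begin{equation*}
C\,\norm{\nabla w_-}_2^{1+d/p}\norm{w_-}_2^{1-d/p}\le \tfrac12\norm{\nabla w_-}_2^2+C'\norm{w_-}_2^2 .
\end{equation*}
Hence $\frac{d}{dt}\norm{w_-}_2^2\le 2C'\norm{w_-}_2^2$ in integrated form. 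Gronwall's lemma with $w_-(0)=0$ then gives $w_-\equiv0$, that is, $u\ge v$ on $[0,T]$. Since $T$ was arbitrary below both maximal times, the ordering holds on the whole common interval of existence.

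\textbf{Step 4: conservation of the increment mass.} Since $w\ge0$, we have $\norm{w(t)}_1=\int u(t)-\int v(t)$. Proposition \ref{prop:local} states that both solutions conserve their mass, so this equals $\int(u_0-v_0)\,dx$, which is \eqref{eq:consdelta}.
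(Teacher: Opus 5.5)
Your proof is correct and takes the same route as the paper. Both test the difference equation with $-w_-$ and bound the flux on $\{w<0\}$ by $(1+k)R^k|b|\,w_-$; the paper packages this as $\norm{hb}_{p,\infty}\le C_h$ via the difference quotient $h$. Both then apply Lorentz H\"older, Gagliardo--Nirenberg with $\theta=d/p$, Young and Gronwall, and obtain the identity from mass conservation. Your explicit justification of $-w_-$ as a test function, via the regularity of Proposition \ref{prop:energy_reg}, is a detail the paper leaves implicit.
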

 
\begin{proof}
Let $w=u-v$ and
\begin{equation*}
h(t,x)=
\begin{cases}
\dfrac{u^{1+k}-v^{1+k}}{u-v}, & u\neq v,\\
(1+k)u^{k}, & u=v .
\end{cases}
\end{equation*}
Then $\partial_tw=\Delta w-\dive(hb w)$, and by the mean value theorem
\begin{equation}\label{eq:betabound}
\norm{h(t)b(t)}_{p,\infty}\le(1+k)B_T\max\bigl\{\norm{u(t)}_\infty,\norm{v(t)}_\infty\bigr\}^{k}=C_{h},
\end{equation}
on $t\in [0,T]$. Testing with $-w_-$, it holds
\begin{equation*}
\frac12\frac{\mathrm d}{\ dt}\norm{w_-}_2^2+\norm{\nabla w_-}_2^2
=-\int h(t)b(t)\,w_-\cdot\nabla w_-\ dx
\le C\norm{h(t)b(t)}_{p,\infty}\norm{w_-}_{p',2}\norm{\nabla w_-}_2 ,
\end{equation*}
By Young's inequality with the exponents $\frac{2}{1+d/p}$ and $\frac{2}{1-d/p}$,
\begin{equation*}
\norm{h(t)b(t)}_{p,\infty}\norm{\nabla w_-}_2^{1+\frac dp}\norm{w_-}_2^{1-\frac dp}
\le\norm{\nabla w_-}_2^2+C\norm{h(t)b(t)}_{p,\infty}^{\frac{2p}{p-d}}\norm{w_-}_2^2 ,
\end{equation*}
so that
\begin{equation*}
\frac{\mathrm d}{\ dt}\norm{w_-}_2^2\le CC_h^{\frac{2p}{p-d}}\norm{w_-}_2^2 .
\end{equation*}
Gronwall's inequality gives $w_-\equiv0$ on $[0,T]$, i.e. $u\ge v$. Identity \eqref{eq:consdelta} then follows from the non-negativity of $w$ and from the conservation of mass for $u$ and for $v$.
\end{proof}

\section{Global existence}\label{sec:globex}
In Subsection \ref{ssec:l2linf} we generalize the interpolation argument of \cite{guidolin2022global} (\cite{bianchini2024existence} in Lorentz spaces) to the multidimensional case. In particular this argument provides the global existence in the subcritical cases, and reduces the critical case from an $L^\infty$ estimate to an $L^2$ estimate. 
Subsection \ref{ssec:increment} contains the main result of the paper, where the $L^2$ norm of every critical solution is estimated by comparison with ordered solutions.
\subsection{Subcritical case and the passage from $L^2$ to $L^\infty$}\label{ssec:l2linf}
\begin{theorem}\label{thm:Guid}
Assume \eqref{eq:assumptions}. For $k<k_c 2^N$ and $M>N$, it holds
\begin{equation*}
\begin{split}
\max_{t \in [0,T]} \|u(t)\|_{2^M} \leq \Big( C(d,k,p)\Big)^{\bar \alpha} 2^{\bar \beta} \Big(B_T\Big)^{\bar \zeta}\max \bigg\{ &\|u(0)\|_{2^M}, 
\max_{t \in [0,T]} \|u\|_{2^{N}}^{\gamma(N,M)} \bigg\}.
\end{split}
\end{equation*}
\end{theorem}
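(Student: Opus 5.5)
We argue by a Moser/Alikakos-type iteration on the dyadic exponents $q_j=2^j$, $j=N,N+1,\dots,M$, in the spirit of \cite{guidolin2022global,bianchini2024existence}. The basic step bounds $\max_t\norm{u(t)}_{2q}$ by $\norm{u(0)}_{2q}$ and $\max_t\norm{u(t)}_{q}$. Chaining these steps from $q=2^N$ up to $q=2^{M-1}$ gives the statement, with $\bar\alpha,\bar\beta,\bar\zeta,\gamma(N,M)$ obtained as finite sums and products of the one-step exponents. The hypothesis $k<k_c2^N$ is needed only at the first level. Since $q_j\ge 2^N$ for every later step, it propagates along the chain.

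\emph{One step.} Set $q\ge 2^N$ and test \eqref{eq:pde} with $u^{2q-1}$, which is justified by Proposition \ref{prop:energy_reg} and the boundedness of $u$ on $[0,T]$ from Proposition \ref{prop:local}. Put $\phi=u^{q}$. Then
\begin{equation*}
\frac{1}{2q}\frac{d}{dt}\norm{\phi}_2^2+\frac{2q-1}{q^2}\norm{\nabla\phi}_2^2=(2q-1)\int b\,u^{1+k}u^{2q-2}\cdot\nabla u\,dx=\frac{2q-1}{q}\int b\,\phi^{1+k/q}\cdot\nabla\phi\,dx .
\end{equation*}
We bound the right-hand side by H\"older in Lorentz spaces (Theorem \ref{Theo:holder_lorentz}):
\begin{equation*}
\Bigl|\int b\,\phi^{1+k/q}\cdot\nabla\phi\Bigr|\le B_T\norm{\phi^{1+k/q}}_{p',2}\norm{\nabla\phi}_2=B_T\norm{\phi}_{(1+k/q)p',\,2(1+k/q)}^{1+k/q}\norm{\nabla\phi}_2 .
\end{equation*}
Next we apply the Lorentz Gagliardo--Nirenberg inequality (Proposition \ref{Prop:gagliardo}, $j=0$, $m=1$, $r=2$) to $\phi$, with the low norm $\norm{\phi}_1=\norm{u}_q^q$. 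This gives $\norm{\phi}_{(1+k/q)p',\cdot}\le C_{GL}\norm{\nabla\phi}_2^{\theta}\norm{\phi}_1^{1-\theta}$. The right-hand side is then at most $C B_T\norm{\nabla\phi}_2^{1+\theta(1+k/q)}\norm{\phi}_1^{(1-\theta)(1+k/q)}$.

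\emph{Main obstacle.} The key point is to check that $1+\theta(1+k/q)<2$, i.e.\ $\theta(1+k/q)<1$. This lets Young's inequality absorb the gradient term into the dissipation $\frac{2q-1}{q^2}\norm{\nabla\phi}_2^2$. Solving $\frac{1}{(1+k/q)p'}=\theta(\frac12-\frac1d)+(1-\theta)$ shows that the condition reduces exactly to $k/q<\frac1d-\frac1p=k_c$, that is, to $k<k_cq$. This holds for $q\ge 2^N$ by assumption. The price of absorption is a factor polynomial in $q$ and in $B_T$, together with a power of $\norm{u}_q^q$. We also need a lower-order term $\norm{\phi}_2^2$ on the left. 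We get it from the Nash-type inequality $\norm{\phi}_2^2\le C\norm{\nabla\phi}_2^{2\vartheta}\norm{\phi}_1^{2(1-\vartheta)}$, used again with Young. The outcome is a differential inequality
\begin{equation*}
\frac{d}{dt}\norm{\phi}_2^2+c\,q^{-a}\norm{\phi}_2^{2\lambda}\norm{\phi}_1^{-\mu}\le C^{q}q^{b}B_T^{e}\norm{\phi}_1^{\nu},
\end{equation*}
for some $\lambda>1$. An ODE comparison then yields $\max_t\norm{\phi}_2^2\le\max\{\norm{\phi(0)}_2^2,\ C' q^{b'}B_T^{e'}\max_t\norm{\phi}_1^{\nu'}\}$. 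Taking the power $1/(2q)$ gives the one-step bound, with constants of the form $C^{1}2^{jc}B_T^{c'/2^j}$.

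\emph{Iteration.} Chaining the one-step bound for $j=N,\dots,M-1$ gives the result. The max-structure is preserved at each step because $\norm{u(0)}_{2^{j}}$ at intermediate levels can be dominated via interpolation by $\norm{u(0)}_{2^M}$ and mass, or simply carried along. The exponents collect into $\bar\alpha=\sum c_j$, $\bar\beta=\sum jc_j$, $\bar\zeta=\sum c'_j2^{-j}$ and $\gamma(N,M)=\prod\nu'_j$. These sums are finite because $M$ is finite, so no convergence issue arises at this stage. Convergence would only matter when letting $M\to\infty$ to reach $L^\infty$. The one place requiring care is the absorption exponent computed above.
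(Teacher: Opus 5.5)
Your one-step computation matches the paper's. You use the same test function $u^{2q-1}$, the Lorentz H\"older bound through $\norm{\phi^{1+k/q}}_{p',2}$, the Lorentz Gagliardo--Nirenberg inequality with the absorption condition $\theta(1+k/q)<1\iff k<k_cq$, and the Nash inequality. Your Young-plus-ODE comparison is an explicit version of the paper's step ``if $\norm{u}_{2n}$ is increasing''.

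\textbf{The gap is in how you track the constants, which is where the content of the theorem lies.} The exponents $\bar\alpha,\bar\beta,\bar\zeta$ must depend only on $d,k,p,N$ and not on $M$. This is exactly what allows Corollaries \ref{cor:subc} and \ref{cor:critL2Linfty} to be obtained by letting $M\to\infty$. You set this aside explicitly (``these sums are finite because $M$ is finite''), and the constants you assert would in fact break it:
\begin{itemize}
\item a factor $C^{q}$ in the differential inequality becomes $C^{1/2}$ per step after the $2q$-th root, so $\bar\alpha\sim M-N$;
\item a factor $2^{jc}$ per step gives $\bar\beta\sim cM^2/2$.
\end{itemize}
With these constants the bound degenerates as $M\to\infty$ and yields no $L^\infty$ estimate.

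The missing observation is that no $C^q$ actually arises. The quantity $\theta(1+k/q)=\frac{1/2+1/p+k/q}{1/2+1/d}$ is decreasing in $q$. So for all $q\ge 2^N$ it stays at or below its value at $q=2^N$, which is $<1$ because $k<k_c2^N$. It also stays above $\frac{1/2+1/p}{1/2+1/d}>0$. Hence the Young exponent $2/(1-\theta(1+k/q))$ and the Young constant are bounded uniformly in $q$. The right-hand side of your differential inequality is then only $(CqB_T)^{O(1)}$ times a power of $\max_t\norm{\phi}_1$.

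After the Nash step and the $2q$-th root, the factor contributed by step $j$ is $C^{O(2^{-j})}2^{O(j2^{-j})}B_T^{O(2^{-j})}$. Before the chaining factor $\gamma(j+1,M)$, the per-step $B_T$-exponent is $2^{-j-1}/(k_c-k2^{-j})$. All these exponents are summable, and they are precisely the paper's series $\alpha(N,M),\beta(N,M),\zeta(N,M)$. You also need the Gagliardo--Nirenberg constants for the varying exponent $(1+k/q)p'$ to be bounded uniformly in $q$; the paper uses this implicitly as well.

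A smaller point concerns reducing the intermediate terms $\norm{u(0)}_{2^j}^{\gamma(j,M)}$ to the two-term maximum. You should interpolate between $\norm{u(0)}_{2^N}\le\max_t\norm{u(t)}_{2^N}$ and $\norm{u(0)}_{2^M}$, not toward the mass, which does not appear on the right-hand side unless $N=0$. The exponents match exactly because $1/\gamma(j,M)$ is affine in $2^{-j}$, and this is what the paper's three-term interpolation exploits. ``Simply carried along'' does not produce the stated form.
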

\begin{proof}
From Equation \eqref{eq:pde} it holds 
\begin{equation}\label{eq:first_guidolin}
\begin{split}
\frac{d}{dt}\int u^{2n}\,dx
&=-\frac{2(2n-1)}{n}\norm{\nabla(u^n)}_2^2
+2(2n-1)\int \nabla(u^n)\cdot b\,u^{n+k}\,dx\\
&\le-\frac{2(2n-1)}{n}\norm{\nabla(u^n)}_2^2
+2(2n-1)\norm{b\,u^{n+k}}_2\norm{\nabla(u^n)}_2\\
&\le-\frac{2(2n-1)}{n}\norm{\nabla(u^n)}_2^2
+\,2(2n-1)\,B_T\,\norm{u^n}_{(1+\frac kn)p',\,2(1+\frac kn)}^{(1+\frac kn)}\norm{\nabla(u^n)}_2\\
&\le-\frac{(2n-1)}{n}\norm{\nabla(u^n)}_2^2
+\,n(2n-1)\,B_T^2\,\norm{u^n}_{(1+\frac kn)p',\,2(1+\frac kn)}^{2(1+\frac kn)}.
\end{split}
\end{equation}

In particular, if $\norm{u}_{2n}$ is increasing, then
\begin{equation}\label{eq:incr}
\norm{\nabla(u^n)}_2\le n\,B_T\,\norm{u^n}_{(1+\frac kn)p',\,2(1+\frac kn)}^{(1+\frac kn)}.
\end{equation}
Gagliardo--Nirenberg in Lorentz spaces, with the exponent $\theta$ given by
\begin{equation}\label{eq:GNun}
\frac{1}{(1+\frac kn)p'}=\Bigl(\frac12-\frac1d\Bigr)\theta+1-\theta.
\end{equation}
gives
\begin{equation*}
\norm{u^n}_{(1+\frac kn)p',\,2(1+\frac kn)}^{(1+\frac kn)}
\le C(d,p,k)\,\norm{\nabla(u^n)}_2^{(1+\frac kn)\theta}\norm{u^n}_1^{(1+\frac kn)(1-\theta)} ,
\end{equation*}
From \eqref{eq:GNun} and \eqref{eq:incr}, it holds
\begin{equation*}
\norm{\nabla(u^n)}_2\le\bigl(C(d,p,k)\,nB_T\bigr)^{\frac{1}{1-(1+\frac kn)\theta}}
\norm{u^n}_1^{\frac{(1+\frac kn)(1-\theta)}{1-(1+\frac kn)\theta}} .
\end{equation*}
Using again Gagliardo--Nirenberg, if $\norm{u}_{2n}$ is increasing then
\begin{equation*}
\norm{u^n}_2\le C(d)\norm{\nabla(u^n)}_2^{\frac{d}{d+2}}\norm{u^n}_1^{\frac{2}{d+2}}
\le C(d,p,k)\,\bigl(nB_T\bigr)^{\frac{\frac12}{\frac1d-\frac kn-\frac1p}}
\norm{u^n}_1^{\frac{1-\frac dp-\frac{dk}{2n}}{1-\frac dp-\frac{dk}{n}}},
\end{equation*}
or equivalently
\begin{equation*}
\norm{u}_{2n}\le\bigl(C(d,k,p)\bigr)^{\frac1n}\bigl(nB_T\bigr)^{\frac{\frac1{2n}}{\frac1d-\frac kn-\frac1p}}
\norm{u}_{n}^{\frac{1-\frac dp-\frac{dk}{2n}}{1-\frac dp-\frac{dk}{n}}} .
\end{equation*}
It is now easy to deduce that
\begin{equation*}
\max_{t\in[0,T]}\norm{u(t)}_{2n}\le\max\Bigl\{\norm{u(0)}_{2n},\
\bigl(C(d,k,p)\bigr)^{\frac1n}\bigl(nB_T\bigr)^{\frac{\frac1{2n}}{\frac1d-\frac kn-\frac1p}}
\max_{t\in[0,T]}\norm{u(t)}_{n}^{\frac{1-\frac dp-\frac{dk}{2n}}{1-\frac dp-\frac{dk}{n}}}\Bigr\}.
\end{equation*}
Iterating for $n=2^l$, $l=N,\dots,M-1$, one obtains
\begin{equation*}
\max_{t\in[0,T]}\norm{u(t)}_{2^M}\le\max\Bigl\{\norm{u(0)}_{2^M},\
\bigl(C(d,k,p)\bigr)^{\alpha(l,M)}2^{\beta(l,M)}B_T^{\zeta(l,M)}
\max_{t\in[0,T]}\norm{u(t)}_{2^{l}}^{\gamma(l,M)}\Bigr\}_{l=N,\dots,M-1},
\end{equation*}
where
\begin{equation*}
\gamma(N,M)=\prod_{n=N+1}^{M}\frac{1-\frac dp-\frac{dk}{2^{n}}}{1-\frac dp-\frac{dk}{2^{n-1}}}
=\frac{1-\frac dp-\frac{dk}{2^{M}}}{1-\frac dp-\frac{dk}{2^{N}}},
\qquad\gamma(M,M)=1,
\end{equation*}
\begin{equation*}
\alpha(N,M)=\sum_{n=N}^{M-1}2^{-n}\gamma(n+1,M)\le\bar\alpha,
\qquad
\beta(N,M)=\sum_{n=N}^{M-1}\frac{n\,2^{-n-1}}{\frac1d-\frac{k}{2^{n}}-\frac1p}\gamma(n+1,M)\le\bar\beta,
\end{equation*}
\begin{equation*}
\zeta(N,M)=\sum_{n=N}^{M-1}\frac{2^{-n-1}}{\frac1d-\frac{k}{2^{n}}-\frac1p}\gamma(n+1,M)\le\bar\zeta,
\end{equation*}
the three series converging, so that
$\bar\alpha,\bar\beta,\bar\zeta$ depend only on $d,k,p$ and $N$. By repeatedly applying
interpolation and Young's inequality,
\begin{equation*}
\begin{split}
\norm{u(0)}_{2^{l}}^{\gamma(l,M)}
&\le\bigl(\norm{u(0)}_{2^{l+1}}^{\frac23}\norm{u(0)}_{2^{l-1}}^{\frac13}\bigr)^{\gamma(l,M)}\\
&\le\frac{2\gamma(l,M)}{3\gamma(l+1,M)}\norm{u(0)}_{2^{l+1}}^{\gamma(l+1,M)}
+\frac{\gamma(l,M)}{3\gamma(l-1,M)}\norm{u(0)}_{2^{l-1}}^{\gamma(l-1,M)}\\
&\le\max\bigl\{\norm{u(0)}_{2^{l+1}}^{\gamma(l+1,M)},\ \norm{u(0)}_{2^{l-1}}^{\gamma(l-1,M)}\bigr\},
\end{split}
\end{equation*}
the intermediate terms are interpolated into the two extreme ones, and the statement follows.
\end{proof}

Taking  $M\to\infty$ and respectively $N=0$ or $N=1$ the two corollaries follow.
\begin{corollary}\label{cor:subc}
  Assume \eqref{eq:assumptions} and $k<k_c$. It holds
\begin{equation*}
\norm{u}_{\infty}\le C(d,k,p)\,B_T^{\bar\zeta}\max\bigl\{\norm{u(0)}_{\infty},\
\norm{u(0)}_1^{\frac{1-\frac dp}{1-\frac dp-dk}}\bigl\}.
\end{equation*}
\end{corollary}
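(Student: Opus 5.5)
The plan is to specialise Theorem \ref{thm:Guid} to $N=0$ and let $M\to\infty$. Since $k<k_c=k_c2^0$, the hypothesis $k<k_c2^N$ holds with $N=0$, so for every $M>0$
\begin{equation*}
\max_{t\in[0,T]}\norm{u(t)}_{2^M}\le C(d,k,p)^{\bar\alpha}2^{\bar\beta}B_T^{\bar\zeta}\max\Bigl\{\norm{u(0)}_{2^M},\ \max_{t\in[0,T]}\norm{u(t)}_1^{\gamma(0,M)}\Bigr\},
\end{equation*}
where $\bar\alpha,\bar\beta,\bar\zeta$ do not depend on $M$. The term $\norm{u(t)}_1$ is constant in time by mass conservation (Proposition \ref{prop:local}), so it equals $\norm{u(0)}_1$. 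The exponent is explicit:
\begin{equation*}
\gamma(0,M)=\frac{1-\frac dp-\frac{dk}{2^M}}{1-\frac dp-dk}\longrightarrow\frac{1-\frac dp}{1-\frac dp-dk}\quad (M\to\infty).
\end{equation*}
The denominator is positive precisely because $k<k_c$.

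Next pass to the limit $M\to\infty$. On the left, $\norm{u(t)}_{2^M}\to\norm{u(t)}_\infty$ for each $t$. This holds because $u(t)\in L^1\cap L^\infty$ on $[0,T]$ with $T<T_*$, by Proposition \ref{prop:local}. The same convergence holds for $\norm{u(0)}_{2^M}$. For the right-hand side, $\norm{u(0)}_1^{\gamma(0,M)}$ converges to $\norm{u(0)}_1^{(1-d/p)/(1-d/p-dk)}$. Bounding $\norm{u(t)}_\infty\le\liminf_M\max_s\norm{u(s)}_{2^M}$ then gives the stated inequality on $[0,T]$, with constant $C(d,k,p)=C^{\bar\alpha}2^{\bar\beta}$.

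The main obstacle is justifying the a priori computation in Theorem \ref{thm:Guid} for the local solution. It needs $u^{n}\in L^2H^1$ and the differentiation of $\int u^{2n}$. These follow from Proposition \ref{prop:energy_reg} together with boundedness of $u$ on $[0,T]$, via a standard regularisation. One must also check that $\bar\alpha,\bar\beta,\bar\zeta$ are uniform in $M$: the series converge because $\frac1d-\frac k{2^n}-\frac1p\ge k_c-k>0$ for all $n\ge0$.

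Finally, the estimate is independent of $T$ except through $B_T$, which is finite for every finite $T$ by \eqref{eq:assumptions}. So $\norm{u(t)}_\infty$ stays bounded on every $[0,T]\cap[0,T_*)$. The continuation criterion of Proposition \ref{prop:local} then gives $T_*=\infty$, which is Theorem \ref{thm:global-main} for $k<k_c$.
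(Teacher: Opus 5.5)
Your proposal is correct and is exactly the paper's argument: the paper obtains Corollary~\ref{cor:subc} by taking $N=0$ and letting $M\to\infty$ in Theorem~\ref{thm:Guid}. Mass conservation turns $\max_t\norm{u(t)}_1$ into $\norm{u(0)}_1$, $\gamma(0,M)\to\frac{1-d/p}{1-d/p-dk}$, and your remarks on the $M$-uniformity of $\bar\alpha,\bar\beta,\bar\zeta$ and on $\norm{u(t)}_{2^M}\to\norm{u(t)}_\infty$ only make explicit what the paper leaves implicit. One caveat is inherited from the statement of Theorem~\ref{thm:Guid} and shared by the paper: bounding $B_T^{\zeta(0,M)}$ by $B_T^{\bar\zeta}$ and placing it outside the maximum tacitly requires $B_T\ge1$, so $B_T$ should be read as $\max\{1,B_T\}$.
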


\begin{corollary}\label{cor:critL2Linfty}
   Assume \eqref{eq:assumptions} and $k=k_c$. It holds
\begin{equation*}
\norm{u}_{\infty}\le C(d,k,p)\,B_T^{\bar\zeta}\max\bigl\{\norm{u(0)}_{\infty},\
\max_{t\in[0,T]}\norm{u(t)}_{2}^{2}\bigr\}.
\end{equation*}
\end{corollary}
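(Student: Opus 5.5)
The corollary should follow from Theorem~\ref{thm:Guid} with $N=1$, letting $M\to\infty$. The plan is to check that the hypothesis of the theorem holds, compute the limit of every exponent and constant, and use that $L^{2^M}$ norms of a function in $L^1\cap L^\infty$ converge to its $L^\infty$ norm.

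\textbf{Step 1: the hypothesis and the limit of $\gamma$.} At $k=k_c$ we have $k_c<2k_c=k_c2^N$ with $N=1$, so Theorem~\ref{thm:Guid} applies for every $M>1$ and gives
\begin{equation*}
\max_{t\in[0,T]}\norm{u(t)}_{2^M}\le \bigl(C(d,k,p)\bigr)^{\bar\alpha}2^{\bar\beta}B_T^{\zeta(1,M)}\max\Bigl\{\norm{u(0)}_{2^M},\ \max_{t\in[0,T]}\norm{u(t)}_2^{\gamma(1,M)}\Bigr\}.
\end{equation*}
Since $dk_c=1-\frac dp$, the exponent is
\begin{equation*}
\gamma(1,M)=\frac{1-\frac dp-\frac{dk_c}{2^M}}{1-\frac dp-\frac{dk_c}{2}}=\frac{(1-\frac dp)(1-2^{-M})}{\frac12(1-\frac dp)}=2\bigl(1-2^{-M}\bigr)\longrightarrow 2 .
\end{equation*}
This is exactly where criticality enters: the denominator $1-\frac dp-\frac{dk}{2}$ is positive but no smaller. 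One also has to check that at $k=k_c$ the quantities $\frac1d-\frac k{2^n}-\frac1p=k_c(1-2^{-n})$ stay bounded away from zero for $n\ge1$. This makes the series defining $\bar\alpha$, $\bar\beta$, $\bar\zeta$ converge, so the constants are uniform in $M$ and depend only on $d,k,p$. Then $C^{\bar\alpha}2^{\bar\beta}$ becomes the constant $C(d,k,p)$, and $\zeta(1,M)\to\bar\zeta$.

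\textbf{Step 2: passing $M\to\infty$.} For $T<T_*$, Proposition~\ref{prop:local} gives $u\in L^\infty([0,T],L^1\cap L^\infty)$. Hence $\norm{u(t)}_{2^M}\to\norm{u(t)}_\infty$ for each $t$, and likewise $\norm{u(0)}_{2^M}\to\norm{u(0)}_\infty$. The interpolation bound $\norm{f}_{2^M}\le\norm f_1^{2^{-M}}\norm f_\infty^{1-2^{-M}}$ makes this convergence uniform enough in $t$ to take the $\max$ over $[0,T]$ on the left side. On the right side, $\max_t\norm{u(t)}_2$ is a fixed finite number $A$, and $A^{2(1-2^{-M})}\to A^2$. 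Taking $\liminf$ on the left and $\lim$ on the right yields the stated inequality.

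\textbf{Main obstacle.} The only delicate point is uniformity in $M$. It concerns the constants $\alpha(1,M)$, $\beta(1,M)$, $\zeta(1,M)$, and in particular the factor $B_T^{\zeta(1,M)}$ when $B_T<1$, where monotonicity in the exponent reverses. I would handle this by bounding $B_T^{\zeta(1,M)}\le\max\{1,B_T\}^{\bar\zeta}$, or simply by using the convergence $\zeta(1,M)\to\bar\zeta$. Either way the final bound has the claimed form, possibly after absorbing a harmless $\max\{1,\cdot\}$ into $C(d,k,p)$.
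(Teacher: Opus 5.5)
Your proposal is correct and is exactly the paper's argument: the paper's entire proof is the remark that one takes $N=1$ (admissible since $k_c<2k_c$) and $M\to\infty$ in Theorem~\ref{thm:Guid}, where $\gamma(1,M)=2(1-2^{-M})\to2$ at $k=k_c$. The only loose point is your last paragraph: $\max\{1,B_T\}^{\bar\zeta}$ cannot be absorbed into a constant depending only on $d,k,p$, but no absorption is needed, since the theorem as stated already carries $B_T^{\bar\zeta}$ (and $\zeta(1,M)\uparrow\bar\zeta$ in any case); be aware, though, that the proof of Theorem~\ref{thm:Guid} puts this factor only on the second entry of the max, which gives $\norm{u}_\infty\le\max\{\norm{u(0)}_\infty,\,C B_T^{\bar\zeta}\max_{t}\norm{u(t)}_2^2\}$, and this is the form that remains valid for small $B_T$ (for $b\equiv0$ and $u_0\not\equiv0$ the right-hand side of the stated product form vanishes).
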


\subsection{Energy estimate for the increment}\label{ssec:increment}
Let $u\ge v$ be two solutions of \eqref{eq:pde}, and set
\begin{equation*}
w:=u-v\ge0,\qquad \delta:=\norm{w(0)}_1\overset{\eqref{eq:consdelta}}{=}\norm{w(t)}_1.
\end{equation*}
Subtracting the two equations, it holds
\begin{equation}\label{eq:weq}
\partial_tw+\dive\bigl(b\,H(v,w)\bigr)=\Delta w,\qquad H(v,w):=(v+w)^{1+k}-v^{1+k},
\end{equation}
in the sense of distributions. Since $0<k\le1$, the map $s\mapsto s^{k}$ is subadditive, whence
\begin{equation}\label{eq:split}
\begin{split}
H(v,w)&=(1+k)\int_0^w(v+s)^{k}\ ds\\
&\le(1+k)\int_0^w\bigl(v^{k}+s^{k}\bigr)\ ds\\
&=(1+k)v^{k}w+w^{1+k}\\
&\le(1+k)\norm{v(t)}_\infty^k\,w+w^{1+k}.
\end{split}
\end{equation}
From \eqref{eq:weq} it holds
\begin{equation}\label{eq:w}
  \begin{split}
\frac d{dt}\norm{w}_2^2+\norm{\nabla w}_2^2&\le\int|b|^2H(v,w)^2\ dx\\
&\le 2\int|b|^2w^{2+2k}\ dx+2(1+k)^2\norm{v(t)}_\infty^{2k}\int|b|^2w^2\ dx
\end{split}
\end{equation}
We treat the last two terms separately.
\begin{equation*}
  \begin{split}
2\int|b|^2w^{2+2k}\ dx &\le  2\norm{b}_{p,\infty}^2\norm{w}_{2(k+1)p/(p-2),2(k+1)}^{2(k+1)}\\
&\le  2 C_{GL}\norm{b}_{p,\infty}^2\delta^{2k}\norm{\nabla w}_2^2
\end{split}
\end{equation*}
and defining
\begin{equation}\label{eq:deltastar}
\delta_*:=\begin{cases}
\bigl(8\,C_{GL}\sup_{t
\in[0,T]}\norm{b}_{p,\infty}^2\bigr)^{-1/(2k)}&\sup_{t
\in[0,T]}\norm{b}_{p,\infty}\ne 0\\
+\infty &\sup_{t
\in[0,T]}\norm{b}_{p,\infty}=0
\end{cases}
\end{equation}
so that
\begin{equation}\label{eq:absorbed}
\delta\le\delta_*\qquad\Longrightarrow\qquad 2\int|b|^2w^{2(k+1)}\ dx\le\frac14 \norm{\nabla w}_2^2 .
\end{equation}
 For the other term, 
\begin{equation}\label{eq:backterm}
\begin{split}
2(1+k)^2\norm{v(t)}_\infty^{2k}\int|b|^2w^2\ dx
&\le 2(1+k)^2C_I\,\norm{v(t)}_\infty^{2k}\norm{b}_{p,\infty}^2\norm{\nabla w}_2^{\frac{2d}{p}}\norm{w}_2^{2\left(1-\frac dp\right)}\\
&\le\frac14 \norm{\nabla w}_2^2+C_{GL}C_I\norm{b}_{p,\infty}^{\frac{2p}{p-d}}\norm{v(t)}_\infty^{\frac{2kp}{p-d}}\norm{w}_2^2\\
&=\frac14 \norm{\nabla w}_2^2+C_{GL}C_I\norm{b}_{p,\infty}^{\frac{2p}{p-d}}\norm{v(t)}_\infty^{2/d}\norm{w}_2^2 .
\end{split}
\end{equation}
Combining \eqref{eq:w}, \eqref{eq:absorbed}, \eqref{eq:backterm} it follows the next theorem.

\begin{theorem}[Critical $L^2$ estimate for ordered increments]\label{thm:increment}
Let $d\ge2$, $d<p\le\infty$, $k=k_c$, and let $u\ge v$ be solutions of \eqref{eq:pde} with $u(0)\ge v(0)\ge0$ and $\delta\le \delta_*$. Then for every $0\le t\le T< T^*$
\begin{equation}\label{eq:incrementode}
\frac{\mathrm d}{\ dt}\norm{w(t)}_2^2+\frac12\norm{\nabla w(t)}_2^2
\le C_{GL}C_I\, B_T^{\frac{2p}{p-d}}\,\norm{v(t)}_\infty^{2/d}\,\norm{w(t)}_2^2 ,
\end{equation}
and consequently, for every $t< T^*,$
\begin{equation}\label{eq:incrementbound}
\norm{w(t)}_2^2\le\norm{w(0)}_2^2\,
\exp\Bigl(C_{GL}C_I{B_{T^*}}^{\frac{2p}{p-d}}\int_0^t\norm{v(r)}_\infty^{2/d}\ dr\Bigr).
\end{equation}
\end{theorem}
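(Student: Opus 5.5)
The plan is to justify the energy identity for the increment and then assemble \eqref{eq:w}, \eqref{eq:absorbed} and \eqref{eq:backterm}.

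\emph{Energy identity.} By \Cref{lem:comparison}, $w=u-v\ge0$ and $\norm{w(t)}_1=\delta$ for all $t<T^*$. By \Cref{prop:energy_reg}, applied to $u$ and to $v$, $w\in L^\infty(0,T;L^2)\cap L^2(0,T;H^1)$ and $\partial_tw\in L^2(0,T;H^{-1})$. The flux $I:=bH(v,w)$ satisfies $|I|\le|b|\,(u^{1+k}+v^{1+k})$. By \eqref{eq:flux} it therefore lies in $L^\infty(0,T;L^2)$, so the identity \eqref{eq:energyid} holds for $z=w$. In differential form it reads
\[
\frac12\frac{\mathrm d}{\mathrm dt}\norm{w}_2^2+\norm{\nabla w}_2^2=\int I\cdot\nabla w\,dx\le\frac12\norm{\nabla w}_2^2+\frac12\int|b|^2H(v,w)^2\,dx .
\]
Multiplying by $2$ gives \eqref{eq:w}. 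The second line of \eqref{eq:w} follows from \eqref{eq:split}, since $0\le H\le(1+k)\norm{v}_\infty^kw+w^{1+k}$, together with $(a+c)^2\le2a^2+2c^2$. The subadditivity used in \eqref{eq:split} needs $k\le1$. This holds because $k=k_c<1/d\le1/2$.

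\emph{Critical term.} The nontrivial check is that the Gagliardo--Nirenberg exponents close exactly at $k=k_c$. We apply the Lorentz version of \Cref{Prop:gagliardo} with $j=0$, $m=1$, $r=2$, $q=1$ and target exponent $P=2(1+k)p/(p-2)$. The requirement is
\[
\norm{w}_{P,2(1+k)}^{2(1+k)}\le C\norm{\nabla w}_2^{2}\norm{w}_1^{2k},
\]
so the exponent $\theta$ must satisfy $(1+k)\theta=1$. The scaling relation is $1/P=\theta(\tfrac12-\tfrac1d)+(1-\theta)$. Substituting $\theta=1/(1+k)$ and $\tfrac1P=\tfrac{p-2}{2p(1+k)}$ gives $\tfrac12-\tfrac1p=\tfrac12-\tfrac1d+k$, which is exactly $k=\tfrac1d-\tfrac1p=k_c$.

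We must also have $\theta\in(0,1)$ strictly, so that the Lorentz refinement applies; this holds since $k>0$. Hölder in Lorentz spaces (\Cref{Theo:holder_lorentz}) then bounds $\int|b|^2w^{2+2k}\le\norm{b}_{p,\infty}^2\norm{w}_{P,2(1+k)}^{2(1+k)}$. Using mass conservation $\norm{w}_1=\delta$, this gives $2\int|b|^2w^{2+2k}\le2C_{GL}B_T^2\delta^{2k}\norm{\nabla w}_2^2$. Under $\delta\le\delta_*$ the right-hand side is at most $\tfrac14\norm{\nabla w}_2^2$, which is \eqref{eq:absorbed}.

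\emph{Linear term.} Hölder again gives $\int|b|^2w^2\le\norm{b}_{p,\infty}^2\norm{w}_{p',2}^2$. We interpolate $\norm{w}_{p',2}\le C\norm{\nabla w}_2^{d/p}\norm{w}_2^{1-d/p}$, using Gagliardo--Nirenberg with $q=2$; this is valid because $p>d$. Young's inequality with exponents $p/d$ and $p/(p-d)$ yields \eqref{eq:backterm}. The exponent of $\norm{v}_\infty$ equals $2kp/(p-d)=2/d$, since $k_c\,p/(p-d)=1/d$.

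Summing the two bounds, the right-hand side of \eqref{eq:w} is at most $\tfrac12\norm{\nabla w}_2^2+C\,B_T^{2p/(p-d)}\norm{v}_\infty^{2/d}\norm{w}_2^2$. Moving the gradient term to the left gives \eqref{eq:incrementode}. Dropping the gradient term and applying Gronwall on $[0,t]$, with $B_T\le B_{T^*}$, gives \eqref{eq:incrementbound}. The integral in the exponent is finite for $t<T^*$ by \Cref{prop:local}.

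The main obstacle is the rigorous justification of \eqref{eq:energyid} for $w$ with the nonlinear flux, along with confirming the endpoint applicability of the Lorentz Gagliardo--Nirenberg inequality. Both are covered by \Cref{prop:energy_reg} and by $0<k$ respectively.
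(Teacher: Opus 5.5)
Your proposal is correct and follows the paper's argument. You take the energy inequality \eqref{eq:w} from the subadditive splitting \eqref{eq:split}, absorb the critical term under $\delta\le\delta_*$ by Lorentz H\"older plus Gagliardo--Nirenberg (with $\theta=1/(1+k)$, which closes exactly at $k=k_c$), bound the linear term by interpolation and Young as in \eqref{eq:backterm} using $2kp/(p-d)=2/d$, and finish with Gronwall — this is the paper's route. Your justification of the energy identity for $w$ via \Cref{prop:energy_reg} and \eqref{eq:flux} spells out a step the paper only asserts, and the degenerate case $p=\infty$ in the linear term, where $\theta=d/p=0$, is harmless because then $\norm{w}_{p',2}=\norm{w}_2$.
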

 
\subsection{Arbitrary mass in finitely many steps}\label{sec:chain}

Theorem \ref{thm:increment} is applied along a finite chain of data interpolating between $0$ and the datum of interest. 
We state the conclusion in the conditional form in which it is used: the chain propagates $L^2$ bounds, and it needs, at each step, an $L^\infty$ bound for the solution constructed at the previous step. 
Producing that bound from an $L^2$ bound is the content of Corollary \ref{cor:critL2Linfty}.
 
We define
\begin{equation}\label{eq:L2estimate}
\begin{split}
&L_i=\frac{\sqrt{\norm{u_0}_1\norm{u_0}_\infty}}{N}\sum_{l=0}^{i-1}e^{C_L B_T^{\frac{2p}{p-d}} F_l^{2/d}},\\
&F_l = C_*\max\bigl\{\norm{u_0}_\infty,\,L_l^2\bigr\},\\
&C_*=C(d,k_c,p)B_T^{\bar\zeta}\ \text{as in Corollary 4.3,}\\
&C_L=\frac{C_{GL}C_I T}{2},
\end{split}
\end{equation}
so that 
\begin{equation*}L_{j-1}+\frac{\sqrt{\norm{u_0}_1 \norm{u_0}_\infty}}{N}\,
\exp\Bigl(C_L B_T^{\frac{2p}{p-d}}\,F_{j-1}^{2/d}\Bigr)=:L_j
\end{equation*}
\begin{theorem}[The chain]\label{thm:chain}
Let $d\ge2$, $d<p\le\infty$, and define
\begin{equation}\label{eq:N}
N:=\max\left\{\left\lceil\frac {\norm{u_0}_1}{\delta_*}\right\rceil,1\right\}.
\end{equation}
For $j=0,1,\dots,N$ let $u_{(j)}$ be the solution of \eqref{eq:pde} with initial datum
\begin{equation*}
u_{(j)}(0,\cdot)=\frac jN\,u_0 ,
\end{equation*}
so that $u_{(0)}\equiv0$. Assume that for some $j\ge1$ the previous solution obeys
\begin{equation*}
\sup_{t\le T}\bigl\lVert u_{(j-1)}(t)\bigr\rVert_\infty\le F_{j-1},
\qquad
\sup_{t\le T}\bigl\lVert u_{(j-1)}(t)\bigr\rVert_2\le L_{j-1} .
\end{equation*}
Then $u_{(j)}\ge u_{(j-1)}$, the increment has the conserved mass $\norm{u_0}_1/N\le\delta_*$, and
\begin{equation}\label{eq:chainbound}
\sup_{t\le T}\bigl\lVert u_{(j)}(t)\bigr\rVert_2\le L_{j-1}+\frac{\sqrt{\norm{u_0}_1 \norm{u_0}_\infty}}{N}\,
\exp\Bigl(C_L B_T^{\frac{2p}{p-d}}\,F_{j-1}^{2/d}\Bigr)=:L_j .
\end{equation}
In particular, for $u_{(N)}=u$ it holds a $L^2$ bound for the solution with datum $u_0$, and the constant $L_N$ depends only on $d,p,T,B_T,\norm{u_0}_1$ and $\norm{u_0}_\infty$.
\end{theorem}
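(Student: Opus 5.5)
The plan is to apply Theorem \ref{thm:increment} to the pair $u=u_{(j)}$, $v=u_{(j-1)}$, and then use the triangle inequality in $L^2$.

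\textbf{Ordering and mass of the increment.} The data satisfy $u_{(j)}(0)=\frac jN u_0\ge\frac{j-1}N u_0=u_{(j-1)}(0)\ge0$. Lemma \ref{lem:comparison} therefore gives $u_{(j)}\ge u_{(j-1)}$ on the common interval of existence. It also shows that $w=u_{(j)}-u_{(j-1)}$ has conserved mass $\norm{w(t)}_1=\norm{u_0}_1/N$. By the choice \eqref{eq:N} of $N$, this mass is at most $\delta_*$. (If $\norm{u_0}_1\le\delta_*$, then $N=1$ and the bound is trivial; if $B_T=0$, then $\delta_*=+\infty$.) So the hypothesis $\delta\le\delta_*$ of Theorem \ref{thm:increment} holds, with $\delta_*$ computed from $B_T$ on $[0,T]$.

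\textbf{$L^2$ bound on the increment.} I use \eqref{eq:incrementode} on $[0,T]$ rather than \eqref{eq:incrementbound}, so that only $B_T$ enters. Drop the gradient term, integrate, and insert the hypothesis $\norm{v(t)}_\infty\le F_{j-1}$. This gives
\[
\norm{w(t)}_2^2\le\norm{w(0)}_2^2\exp\bigl(C_{GL}C_I B_T^{\frac{2p}{p-d}}F_{j-1}^{2/d}\,T\bigr).
\]
Taking square roots halves the exponent, which produces exactly $C_L=C_{GL}C_IT/2$.

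For the initial term, $w(0)=u_0/N$, so
\[
\norm{w(0)}_2\le\frac1N\norm{u_0}_1^{1/2}\norm{u_0}_\infty^{1/2}.
\]
Then $\norm{u_{(j)}(t)}_2\le\norm{u_{(j-1)}(t)}_2+\norm{w(t)}_2\le L_{j-1}+\dots$, which is \eqref{eq:chainbound}.

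\textbf{Existence up to $T$ (the main obstacle).} A priori $u_{(j)}$ could cease to exist before $T$, i.e.\ $T^*_{(j)}\le T$. To rule this out, work on $[0,T']$ for every $T'<\min\{T,T^*_{(j)}\}$. The argument above gives the uniform $L^2$ bound $L_j$ on $[0,T']$. Corollary \ref{cor:critL2Linfty} then gives
\[
\norm{u_{(j)}}_\infty\le C_*\max\{\norm{u_0}_\infty,L_j^2\}=F_j,
\]
using that the data satisfy $\norm{\frac jN u_0}_\infty\le\norm{u_0}_\infty$. This bound is uniform in $T'$. Hence the local continuation criterion of Proposition \ref{prop:local} forces $T^*_{(j)}>T$, and the estimates hold on all of $[0,T]$.

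\textbf{Induction and dependence of the constants.} The same $L^\infty$ bound $F_j$ supplies the hypothesis for step $j+1$. The base case is $u_{(0)}\equiv0$, with $L_0=0$ and $F_0=C_*\norm{u_0}_\infty$. Induction then yields the bound $L_N$ for $u_{(N)}=u$. Finally, $L_N$ depends only on $d,p,T,B_T,\norm{u_0}_1,\norm{u_0}_\infty$, because $N$, $\delta_*$, $C_*$ and $C_L$ depend only on these quantities.
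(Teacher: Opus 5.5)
Your proposal is correct and follows the paper's proof. You use Lemma \ref{lem:comparison} for the ordering and the conserved increment mass $\norm{u_0}_1/N\le\delta_*$. You apply Theorem \ref{thm:increment} with $\norm{u_{(j-1)}}_\infty\le F_{j-1}$ and $\norm{w(0)}_2\le\sqrt{\norm{u_0}_1\norm{u_0}_\infty}/N$, which yields $C_L=C_{GL}C_IT/2$, and you conclude with the triangle inequality in $L^2$. Two of your steps go beyond what the paper writes out, and both are handled soundly: the continuation argument (ruling out blow-up of $u_{(j)}$ before $T$ via Corollary \ref{cor:critL2Linfty}), which the paper leaves implicit, and the induction from $u_{(0)}\equiv0$, which the paper carries out in the proof of Theorem \ref{thm:global-main}.
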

 
\begin{proof}
Since $\frac jNu_0\ge\frac{j-1}Nu_0\ge0$, Lemma \ref{lem:comparison} gives $u_{(j)}\ge u_{(j-1)}$ and
\begin{equation*}
\bigl\lVert u_{(j)}(t)-u_{(j-1)}(t)\bigr\rVert_1=\frac1N\norm{u_0}_1\le\delta_*.
\end{equation*}
Theorem \ref{thm:increment}, applied to the pair $(u_{(j)},u_{(j-1)})$, therefore gives
\begin{equation*}
\begin{split}
\bigl\lVert u_{(j)}(t)-u_{(j-1)}(t)\bigr\rVert_2
&\le\frac{\norm{u_0}_2}N\exp\Bigl(\frac{C_{GL}C_I}{2}B_T^{\frac{2p}{p-d}}\int_0^t\bigl\lVert u_{(j-1)}(s)\bigr\rVert_\infty^{2/d}\ ds\Bigr)\\
&\le\frac{\sqrt{\norm{u_0}_1 \norm{u_0}_\infty}}N\exp\Bigl(C_L B_T^{\frac{2p}{p-d}}F_{j-1}^{2/d}\Bigr).
\end{split}
\end{equation*}
The triangle inequality
$\norm{u_{(j)}}_2\le\norm{u_{(j-1)}}_2+\norm{u_{(j)}-u_{(j-1)}}_2$
gives \eqref{eq:chainbound}.
\end{proof}

\begin{proof}[Proof of Theorem \ref{thm:global-main}]
In the subcritical case the bound is Corollary \ref{cor:subc}. 
Let $k=k_c$ and let $N$ be as in \eqref{eq:N}. We argue by induction on $j$. $u_{(0)}=0$ satisfies the two bounds with $L_0=0$. 
If $u_{(j-1)}$ obeys 
\begin{equation*}
    \sup_{t\le T}\norm{u_{(j-1)}}_2\le L_{j-1}
\end{equation*} and
\begin{equation*}
    \sup_{t\le T}\norm{u_{(j-1)}}_\infty\le F_{j-1},
\end{equation*}
then Theorem \ref{thm:chain} gives the $L^2$ bound $L_j$ for $u_{(j)}$, 
and Corollary \ref{cor:critL2Linfty} converts it into the $L^\infty$ bound $F_j$. 
After $N$ steps $u_{(N)}=u$ has datum $u_0$, and $F_N$ depends only on $d,p,k,T,B_T,\norm{u_0}_1,\norm{u_0}_\infty$.
\end{proof}

\section{Long-time behaviour of solutions}
\label{sec:longbehav}

In this last section we assume \eqref{eq:assumptions2} and we set $B_\infty:=\sup_{t>0}\norm{b(t)}_{p,\infty}$.

\begin{proposition}\label{prop:stationary}
Let $0<k<k_c$ and let $\frac d2<\alpha<\frac{1-d/p}{2k}$, an interval which is non-empty
precisely because $k<k_c$. Then
\begin{equation*}
u(x)=\bigl(1+|x|^2\bigr)^{-\alpha},\qquad b(x)=-2\alpha\,x\,\bigl(1+|x|^2\bigr)^{\alpha k-1}
\end{equation*}
satisfy $u\in L^1\cap L^\infty$, $b\in L^{p}(\R^d;\R^d)$, and $u$ is a stationary solution
of \eqref{eq:pde}. In particular no decay holds in general in the subcritical regime.
\end{proposition}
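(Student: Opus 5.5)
The plan is a direct verification: the stationary equation holds with zero flux, and the integrability conditions reduce to elementary exponent checks at the origin and at infinity.

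\emph{Stationarity.} It suffices to show that the total flux $\nabla u-b\,u^{1+k}$ vanishes identically, since then
\begin{equation*}
\Delta u-\dive\bigl(b\,u^{1+k}\bigr)=\dive\bigl(\nabla u-b\,u^{1+k}\bigr)=0 .
\end{equation*}
A direct computation gives $\nabla u=-2\alpha\,x\,(1+|x|^2)^{-\alpha-1}$. Using $u^{1+k}=(1+|x|^2)^{-\alpha-\alpha k}$,
\begin{equation*}
b\,u^{1+k}=-2\alpha\,x\,(1+|x|^2)^{\alpha k-1-\alpha-\alpha k}=-2\alpha\,x\,(1+|x|^2)^{-\alpha-1}.
\end{equation*}
The two expressions coincide. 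Since $u$ and $b$ are smooth, the identity holds classically, hence also in the distributional and Duhamel sense of Proposition \ref{prop:local}.

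\emph{Integrability.} The function $u$ is bounded by $1$, and $u\simeq|x|^{-2\alpha}$ at infinity, so $u\in L^1$ if and only if $2\alpha>d$. This is the lower bound $\alpha>\frac d2$.

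The field $b$ is smooth, hence locally bounded, and $|b(x)|\simeq|x|^{2\alpha k-1}$ as $|x|\to\infty$. Thus $b\in L^p$ if and only if $p(2\alpha k-1)<-d$, that is $2\alpha k<1-\frac dp$. This is the upper bound $\alpha<\frac{1-d/p}{2k}$; for $p=\infty$ the condition is only that $b$ be bounded, which holds because $2\alpha k-1<-\frac dp\le0$.

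The interval of admissible $\alpha$ is non-empty if and only if $\frac d2<\frac{1-d/p}{2k}$, i.e. $dk<1-\frac dp$, i.e. $k<k_c$. Because $b$ is autonomous and $L^p\subset L^{p,\infty}$, assumption \eqref{eq:assumptions2} holds with $B_\infty=\norm{b}_{p,\infty}<\infty$.

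\emph{No decay.} The datum $u_0=u$ is non-negative and lies in $L^1\cap L^\infty$. By the uniqueness in Proposition \ref{prop:local}, the solution with this datum is the stationary function itself, so $\norm{u(t)}_\infty=1$ for all $t$ and no decay can hold. No step is a genuine obstacle; the only care needed is the exponent bookkeeping for $b\in L^p$, including the case $p=\infty$.
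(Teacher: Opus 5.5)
Your proposal is correct and follows the paper's proof: both check that the flux $\nabla u-b\,u^{1+k}$ vanishes identically, and both reduce $u\in L^1\cap L^\infty$ and $b\in L^p$ to the exponent conditions $2\alpha>d$ and $2\alpha k-1<-\frac dp$. Your treatment of $p=\infty$ and your uniqueness argument for the no-decay claim (the solution with datum $u$ is the stationary one, so $\norm{u(t)}_\infty=1$) fill in details the paper leaves implicit.
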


\begin{proof}
Since $2\alpha>d$ we have $u\in L^1\cap L^\infty$, and
$b\,u^{1+k}=-2\alpha x(1+|x|^2)^{-\alpha-1}=\nabla u$, so $\nabla u-bu^{1+k}$
vanishes identically. Finally $|b(x)|\le C(1+|x|)^{2\alpha k-1}$ with
$2\alpha k-1<-\frac dp$, whence $b\in L^p$.
\end{proof}

\begin{proof}[Proof of Theorem \ref{Theo:long_time_1}]
{\it Point \eqref{Point_1:long_time_1}.} This is exactly Proposition \ref{prop:stationary}.

{\it Point \eqref{Point_2:long_time_1}.} Let $N$ be as in \eqref{eq:N}, with $\delta_*$ computed with $B_\infty$. Let $u_{(j)}$ be the solutions of Theorem \ref{thm:chain} and set
\begin{equation*}
    t=e^\tau-1,\qquad x=e^{\tau/2}y,\qquad v_{(j)}(\tau,y)=e^{d\tau/2}u_{(j)}(t,x).
\end{equation*}
Each $v_{(j)}$ solves
\begin{equation*}
    \partial_\tau v=\Delta_yv+\frac12\dive_y(yv)-\dive_y(\tilde b\,v^{1+k})
\end{equation*}
with $\norm{\tilde b(\tau)}_{p,\infty}=\norm{b(e^\tau-1)}_{p,\infty}\le B_\infty$, and
$w_j=v_{(j)}-v_{(j-1)}\ge0$ has mass $\norm{u_0}_1/N\le\delta_*$. Arguing as in Subsection
\ref{ssec:increment}, the additional term gives $\frac d2\norm{w_j}_2^2$, and
Gagliardo--Nirenberg
\begin{equation*}
    \norm{w_j}_2^{2+\frac4d}\le C_G\norm{\nabla w_j}_2^2(\norm{u_0}_1/N)^{\frac4d}
\end{equation*}
gives
\begin{equation*}
    \frac{d}{d\tau}\norm{w_j}_2^2\le A_j\norm{w_j}_2^2-C_W\,\norm{w_j}_2^{2+\frac4d},
    \qquad
    A_j=\frac d2+C B_\infty^{\frac{2p}{p-d}}\sup_{\tau>0}\norm{v_{(j-1)}(\tau)}_\infty^{2/d},
\end{equation*}
with $C_W=\frac1{2C_G}\bigl(N/\norm{u_0}_1\bigr)^{\frac4d}$. Since the right-hand side is negative for $\norm{w_j}_2^2>(A_j/C_W)^{d/2}$,
it follows that $\sup_\tau\norm{v_{(j)}}_2<\infty$ whenever $\sup_\tau\norm{v_{(j-1)}}_\infty<\infty$.
We now apply the iteration of Theorem \ref{thm:Guid} to $v_{(j)}$. The only additional term gives
\begin{equation*}
    \frac{(2n-1)d}2\norm{v^n}_2^2\le\frac{2n-1}{2n}\norm{\nabla(v^n)}_2^2+Cn^{1+\frac d2}\norm{v^n}_1^2 .
\end{equation*}
Since $v_{(j)}(0)=\frac jNu_0$ is bounded, Corollary \ref{cor:critL2Linfty} then gives
$\sup_\tau\norm{v_{(j)}}_\infty<\infty$.

Starting from $v_{(0)}\equiv0$, induction on $j$ yields $\sup_\tau\norm{v(\tau)}_\infty<\infty$
for $v=v_{(N)}$. Since 
\begin{equation*}
\norm{u(t)}_\infty=(1+t)^{-d/2}\norm{v(\tau)}_\infty,
\end{equation*} the claim
follows.

{\it Point \eqref{Point_3:long_time_1}.}  We can write
\begin{equation*}
b\,u^{1+k}=\hat b\,u^{1+k_c},\qquad \hat b:=b\,u^{\,k-k_c},
\end{equation*}
so that $u$ solves \eqref{eq:pde} with the critical exponent $k_c$ and the field $\hat b$,
which satisfies
\begin{equation*}
\norm{\hat b(t)}_{p,\infty}\le\norm{b(t)}_{p,\infty}\norm{u}_{L^\infty_{t,x}}^{\,k-k_c}
\le B_\infty\norm{u}_{L^\infty_{t,x}}^{\,k-k_c}<\infty ,
\end{equation*}
hence \eqref{eq:assumptions2}. Point \eqref{Point_2:long_time_1} applies to $\hat b$ and gives the claim.
\end{proof}
\printbibliography

\end{document}